\numberwithin{equation}{section}
\newtheorem{maintheorema}{Theorem}
\newtheorem{theorem}{Theorem}[section]
\newtheorem{corollary}[theorem]{Corollary}
\newtheorem{proposition}[theorem]{Proposition}
\newtheorem{lemma}[theorem]{Lemma}
\newtheorem{definition}[theorem]{Definition}
\theoremstyle{remark}
\newtheorem{remark}[theorem]{Remark}
\newtheorem{problem}{Problem}
\title[Mono-atomic disintegration and Lyapunov exponents for DA ]{Mono-atomic disintegration and Lyapunov exponents for derived from Anosov diffeomorphisms}
\author{G. Ponce}
\address{Departamento de Matem\'atica,
  ICMC-USP S\~{a}o Carlos-SP, Brazil.}
  \email{gaponce@icmc.usp.br}
\author{A. Tahzibi} 
\address{Departamento de Matem\'atica,
  ICMC-USP S\~{a}o Carlos-SP, Brazil.}
\email{tahzibi@icmc.usp.br}
\author{R. Var\~{a}o} 
\address{Departamento de Matem\'atica,
  ICMC-USP S\~{a}o Carlos-SP, Brazil {\&  Department of Mathematics, University of Chicago, Chicago, USA.}}
\email{regisvarao@icmc.usp.br}
\date{}                                          
\thanks{G. Ponce is enjoying a Doctoral scholarship of FAPESP process \# 2009/16792-8,  \# 2012/14620-8, and is grateful for the hospitality of Penn State University during the final writing of this work . A. Tahzibi had the support of CNPq and FAPESP. R. Var\~ao got financial support from FAPESP process \# 2011/21214-3  and  \# 2012/06553-9, as well as with the hospitality of the University of Chicago during the final writing of this work.}
\begin{document}
\maketitle

\begin{abstract}

In this paper we mainly address the problem of disintegration of  Lebesgue measure and measure of maximal entropy along the central foliation of  (conservative) Derived from Anosov (DA) diffeomorphisms. We prove that for accessible 
DA diffeomorphisms of $\mathbb{T}^3$, atomic disintegration has the peculiarity of being mono-atomic (one atom per leaf). We further provide open and non-empty condition for the existence of atomic disintegration.  Finally, we prove some new relations between Lyapunov exponents of DA diffeomorphisms and their linearization.


\end{abstract}

\tableofcontents

\section{Introduction}


A diffeomorphism is called partially hyperbolic if the tangent bundle admits an invariant decomposition $TM = E^s \oplus E^c \oplus E^u$, such that all unit vectors $v^{\sigma} \in E^{\sigma}_x, \sigma \in \{s, c, u\}$ for all $x \in M$ satisfy:
$$
  \|D_xf v^s \| < \|D_xf v^c\| < \| D_xf v^u\|
$$ 
and moreover $ \|D_f| E^s\| < 1$ and $\|D f^{-1} | E^{u}\| < 1.$ We call $f$  {\bf absolute} partially hyperbolic if it is partially hyperbolic and for any $x, y, z \in M$
$$
 \|D_xf v^s \| < \|D_yf v^c\| < \| D_zf v^u\|
$$ 
where $v^s, v^c, v^u$ belong respectively to $E_x^s, E^c_y$ and $E^u_z.$
In this paper when we require partial hyperbolicity, we mean absolute partial hyperbolicity.

%

The subbundles $E^s$ and $E^u$ integrate into $f$-invariant foliations, respectively the stable foliation, $\mathcal F^s$, and the unstable foliation, $\mathcal F^u$. These foliations have  a nice property called absolute continuity. Among different definitions for absolute continuity of foliations we choose the following one which is suitable for our purpose: A set of full volume measure on $M$ must intersect almost every leaf of $\mathcal F^s$ (or $\mathcal F^u$) in a set of  full Lebesgue measure of the leaf. Although the absolute continuity of $\mathcal F^s$ and $\mathcal F^u$ are mandatory for a (general)  $C^2$ partially hyperbolic diffeomorphism, this is not the case for the center 
foliation $\mathcal F^c$ (it is not even true that there will exist such a foliation, but by \cite{BBI09} for all absolute partially hyperbolic diffeomorphisms of $\mathbb{T}^3$ the center foliation exists). The center foliation might not be absolutely continuous, at least this is, in general, expected to happen for diffeomorphisms which preserves volume (see \cite{SX}, \cite{HP},  \cite{Gogolev1}). For many examples (some of them described below) the center foliation has atomic disintegration. In all such examples there exists a set $A \subset M$ of full volume measure, such that the intersection of $A$ with every leaf of $\mathcal F^c$ are $k$ points, where $k$ is a natural number independent of the leaf (in the ergodic context). In principle for a general partially hyperbolic difeomorphism the geometric structure of the support of disintegration measures is not clear. We do not have examples of atomic disintegration with infinitely many atoms. 

 There exist essentially three known category of partially hyperbolic diffeomorphisms on  three-dimensional manifolds (see conjecture of Pujals in \cite{BW05}). We deal here with the so called Derived from Anosov (DA) diffeomorphisms. By a DA diffeomorphism we mean a partially hyperbolic diffeomorphim $f:\mathbb T^3 \rightarrow \mathbb T^3$ such that its linearization (see \ref{prepartial} in Preliminaries) is an Anosov diffeomorphism. Observe that by \cite{Htese} the linearization of $f$ is also partially hyperbolic in the sense that it admits three invariant sub-bundles.   The other two classes of partially hyperbolic diffeomorphisms are the skew-product type and perturbations of time-one of Anosov flows (see as well Hammerlindl-Potrie \cite{HP13} for a discussion and new results). 

For the perturbation of a time-one map of the geodesic flow for a closed negatively curved surfaces (which is an Anosov flow), it was shown by A. Avila, M. Viana and A. Wilkinson \cite{AVW} that $\mathcal F^c$ has atomic disintegration or it is absolutely continuous. For a large class of skew-product diffeomorphisms, they announced that  they can prove an analogous result.

It is interesting to emphasize that (conservative) Derived from Anosov (DA) diffeomorphims on $\mathbb T^3$ show a feature that is not, so far, shared with any other known partially hyperbolic diffeomorphims on dimension three, it admits all three disintegration of volume on the center leaf, namely: Lebesgue, atomic, and, by a recent result of R. Var\~ao \cite{Va}, they can also have a disintegration which is neither Lebesgue nor atomic.

 More precisely, R. Var\~ao \cite{Va} showed that there exist Anosov diffeomorphisms with non-absolutely continuous center foliation which does not have atomic disintegration. Here we show a new behavior for DA diffeomorphisms on $\mathbb T^3$, and that is the existence of atomic disintegration (Theorem \ref{theorem:existatom}). This behavior can be verified for an open class of diffeomorphisms found by  Ponce-Tahzibi in \cite{PT}. In fact we prove  (Theorem \ref{theorem:oneatom}): if the disintegration of Lebesgue measure is atomic, then it is in fact mono-atomic, i.e there is just one atom per leaf. We should mention that the most important part of our proof is to conclude finiteness of atoms in the case of atomic disintegration. However, for general partially hyperbolic diffeomorphisms finiteness does not imply mono-atomic disintegration. See \cite{RW} and our discussion after Theorem A below for a contrast with the skew product case.\\


\textbf{Theorem \ref{theorem:oneatom}}  \textit{Let $f$ be a volume preserving accessible DA diffeomorphism on $\mathbb T^3.$ If volume has atomic disintegration on center leaves, then it has one atom per leaf.}\\
 
\begin{remark} \label{remark: nonnull} The accessibility hypothesis for partially hyperbolic diffeomorphisms with one dimensional center bundle implies  that $f^n$ is ergodic for all $n \geq 1.$ (see \cite{HHU1} or \cite{BW}.) This is the unique place that we use accessibility, so that the accessibility hypothesis can be substituted by any other  hypothesis that provides the ergodicity of all $f^n$. For instance, by Hammerlindl-Ures \cite{HU} we know that if the center exponent of $f$ is not zero almost everywhere (consequently the center exponent of $f^n$ is not zero almost everywhere, for all $n$) then it is ergodic (consequently $f^n$ is ergodic for all $n$). This observation will be important in the proof of Theorem B.
\end{remark} 
 
 The conclusion about the number of atoms is not true for skew-products in general. Let $B := A \times Id$ where   
$ A:= \left( \begin{array}{cc}
2 & 1 \\
1 & 1 \\ \end{array} \right)$. Then arbitrarily close to $B : \mathbb{T}^3 \rightarrow \mathbb{T}^3 $ there is an open set of partially hyperbolic diffeomorphisms $g$ such that $g$ is ergodic and there is an equivariant fibration $\pi: \mathbb{T}^3 \rightarrow \mathbb{T}^2$ such that the fibers are circles, $\pi \circ g = B \circ \pi$. And $g$ has positive central Lyapunov exponent, hence the central foliation is not absolutely continuous. In Ruelle and Wilkinson's paper \cite{RW}, we see that there exist $S \subset \mathbb{T}^ 3$ of full volume and $k \in \mathbb{N}$ such that $S$ meets every leaf in exactly $k$ points. In Shub and Wilkinson's example \cite{SW00} the fibers of the fibration are invariant under the action of a finite non-trivial group  and consequently in their example the number of atoms cannot be one.
 
In the next theorem we introduce a class of derived from Anosov diffeomorphisms where Theorem A can be applied. The work of Ponce, Tahzibi \cite{PT} guaranties that there exist DA diffeomorphisms satisfying these conditions.\\

\textbf{Theorem \ref{theorem:existatom}}
\textit{Let $f: \mathbb T^3 \rightarrow \mathbb T^3$ be a volume preserving, DA diffeomorphism. Suppose that its linearization $A$ has the splitting $T_AM = E^{su} \oplus E^{wu} \oplus E^s$ (su and wu represent strong unstable and weak unstable bundles.) If $f$ has $\lambda ^{c}(x) <0$ for Lebesgue almost every point $x \in \mathbb T^3$, then volume has atomic disintegration on $\mathcal F^{c}_f$, in fact the disintegrated measures  have one atom per center leaf.}\\

\begin{remark}\label{remark:proof.theorem}
The key point in the proof of Theorem \ref{theorem:existatom} is  show that we get atomic disintegration. Then, we use Theorem \ref{theorem:oneatom} (because of Remark \ref{remark: nonnull}) to obtain one atom per leaf.
\end{remark}

We look at the linearization of $f$, the linear Anosov $A$, as a partially hyperbolic for which the center leaf $\mathcal F^c_A$ is the expanding leaf $\mathcal F^{wu}_A$.

We mention that the examples of non-absolutely continuous weak foliation of Anosov diffeomorphisms was known by Saghin-Xia \cite{SX} and Baraviera-Bonatti \cite{BB} near to geodesic flows,  and by A.Gogolev near to hyperbolic automorphisms of the $3$-torus \cite{Gogolev1}. However, we are introducing examples of non-Anosov diffeomorphisms with non-absolutely continuous central foliation and prove atomic disintegration. It is not known whether the disintegration of the Lebesgue measure can be atomic in the case of Anosov diffeomorphisms.  


\subsection{Maximal entropy measures and Lyapunov exponents}\label{subsec.max.entropy}

Lyapunov exponents are celebrated constants which are related to the entropy of invariant measures. In this paper  we denote by $\lambda^u(f), \lambda^c(f)$ and $\lambda^s(f)$ the Lyapunov exponents of Lebesgue measure (ergodic case).  When we are referring to the Lyapunov exponents of any other (ergodic) invariant measure $\mu$ we use the subscript $\lambda^{*}_{\mu}.$ 
The relationship between the Lyapunov exponents of a partially hyperbolic diffeomorphism and its linearization is an interesting issue. In \cite{MT}, the authors among other results proved a folkloric semi-rigidity property of unstable and stable Lyapunov exponents of  (absolute) partially hyperbolic diffeomorphisms of $\mathbb{T}^3:$

\begin{theorem}
Let $f$ be a $C^2$ conservative partially hyperbolic diffeomorphism on the $3-$torus and $A$ its linearization then

$$ \lambda^u(f,x) \leq \lambda^u(A) \; \mbox{and} \;\; \lambda^s(f,x) \geq \lambda^s(A)\; \mbox{for Lebesgue a.e. } \; x \in \mathbb{T}^3.$$
\end{theorem}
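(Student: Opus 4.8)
The plan is to exploit the semiconjugacy $h$ between $f$ and its linearization $A$, together with the fact that $h$ pushes the unstable/stable foliations of $f$ onto those of $A$ and is uniformly bounded (at distance $\leq C$ from the identity in the universal cover $\RR^3$). First I would reduce to the ergodic case: if Lebesgue measure is not ergodic one passes to ergodic components, or one simply notes that $\lambda^u(f,x)$ is defined a.e. and the inequality is pointwise, so it suffices to bound $\int \log\|Df|E^u\|\,d\mathrm{vol}$ (or, what is essentially the same after working along unstable leaves, to bound the expansion rate along $\mathcal F^u$). The key mechanism is that $h$ maps $\mathcal F^u_f$-leaves into $\mathcal F^u_A$-leaves and $f$ acts on them by $A$ after conjugation: $h\circ f = A\circ h$. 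Because $h$ is at bounded distance from the identity, the image under $f^n$ of an unstable arc of $f$ of length $\ell$ has $h$-image an unstable arc of $A$ whose length is comparable (up to additive constant $2C$) to $\ell$ after expansion by $A^n$; on the other hand the $f$-unstable arc itself has length governed by $\exp(n\lambda^u(f))$. Comparing the two growth rates forces $\lambda^u(f)\leq \lambda^u(A)=\log(\text{expanding eigenvalue of }A)$.

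The steps, in order: (1) Recall from the preliminaries (and \cite{Htese}, \cite{BBI09}) the existence of the semiconjugacy $h:\mathbb T^3\to\mathbb T^3$, $h\circ f = A\circ h$, with a lift $H=\mathrm{id}+$ (bounded), and that $h$ carries $\mathcal F^s_f,\mathcal F^u_f$ to $\mathcal F^s_A,\mathcal F^u_A$ (the one-dimensional strong foliations, since $A$ has the stated splitting). (2) Fix a typical $x$ for Lebesgue measure and work on its unstable leaf $\mathcal F^u_f(x)$, which is a line; parametrize by arclength. For an arc $J\subset\mathcal F^u_f(x)$ the length of $f^n(J)$ grows like $e^{n\lambda^u(f)}|J|$ (Birkhoff/Oseledets along the leaf, using absolute continuity of $\mathcal F^u$ to get a.e. leaf typical). (3) Project by $h$: $h(f^n(J))=A^n(h(J))$ lies in an unstable line of $A$ and has length $e^{n\lambda^u(A)}\cdot(\text{length of }h(J))$. (4) Since $H$ is a bounded perturbation of the identity, $\mathrm{length}(h(f^n(J)))\geq \mathrm{length}(f^n(J))-2C$ and $\mathrm{length}(h(J))\leq \mathrm{length}(J)+2C$ — here one must be slightly careful that $h$ restricted to an unstable leaf of $f$ is monotone onto an unstable leaf of $A$, so that lengths compare to distances between endpoints and the bounded-distance estimate applies. (5) Let $n\to\infty$ (and $|J|$ chosen so $f^n(J)$ stays long): $e^{n\lambda^u(f)}|J| - 2C \leq e^{n\lambda^u(A)}(|J|+2C)$ forces $\lambda^u(f)\leq\lambda^u(A)$. (6) The stable inequality $\lambda^s(f)\geq\lambda^s(A)$ follows by applying the same argument to $f^{-1}$, whose linearization is $A^{-1}$, swapping the roles of stable and unstable.

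The main obstacle I expect is step (4): controlling that the restriction of $h$ to an unstable leaf of $f$ is genuinely an increasing surjection onto an unstable leaf of $A$ (so that internal arclengths — not merely endpoint distances — are comparable), and that no collapsing occurs at the relevant scale. This uses that $h$ maps $\mathcal F^u_f$ into $\mathcal F^u_A$ and is a semiconjugacy, plus the Global Product Structure on $\mathbb T^3$ (from absolute partial hyperbolicity, cf. \cite{BBI09}): distinct unstable leaves of $A$ have distinct $h$-preimages, and $h$ cannot fold an unstable leaf of $f$ back on itself because that would violate $h\circ f=A\circ h$ together with expansion. A secondary technical point is to ensure the arc $J$ and iterate count $n$ are chosen compatibly (a standard "distortion/long-arc" argument) so that the additive constant $2C$ is negligible against the exponential terms; this is routine once the geometric picture in step (4) is nailed down.
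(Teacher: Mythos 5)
First, a point of comparison: the paper does not prove this statement at all — it is quoted from \cite{MT} as a known result — so there is no in-paper proof to match against. Your mechanism is nevertheless the right one and is essentially the one used in \cite{MT}: bound the growth of an unstable arc from below by $e^{n\lambda^u(f,x)}$ (Oseledets plus Egorov on a positive-measure subset of the arc, with absolute continuity of $\mathcal F^u$ supplying typical points on almost every leaf), bound it from above by the linear rate $e^{n\lambda^u(A)}$ via the comparison with $A$ in the universal cover, and get the stable inequality by applying the same argument to $f^{-1}$.

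The gap is exactly the one you flag in step (4), and as written it is genuine. The inequality $\mathrm{length}(h(f^n(J)))\geq \mathrm{length}(f^n(J))-2C$ does not follow from $\|\tilde h-\mathrm{id}\|\leq C$: a map at bounded distance from the identity can collapse an arbitrarily long curve, so the additive bound is useless until arclength has been converted into endpoint distance. The tool that does this is not monotonicity of $h$ on unstable leaves (a nontrivial claim you leave unproved; when $\lambda^c(A)>0$ the boundedness of $\|A^{-n}(\tilde h(p)-\tilde h(q))\|$ only places $h(\mathcal F^u_f(x))$ in a center-unstable plane of $A$, not in a strong unstable line, without further argument), but the quasi-isometry of $\mathcal F^u_f$ in $\mathbb R^3$ (Definition \ref{qi} and the remark following it, valid for absolute partial hyperbolicity on $\mathbb T^3$): since the leaf is one-dimensional, $\mathrm{length}(f^n(J))=d_{\tilde{\mathcal F}^u}(\tilde f^n(p),\tilde f^n(q))\leq Q\|\tilde f^n(p)-\tilde f^n(q)\|$, and only the endpoint displacement needs to be controlled, for which $\|\tilde h(u)-\tilde h(v)\|\geq\|u-v\|-2C$ is automatic. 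With that fix your computation closes, using $\|A^n v\|\leq \mathrm{const}\cdot e^{n\lambda^u(A)}\|v\|$ for all $v$ (the unstable eigenvalue is the spectral radius), so you do not even need $h$ to preserve unstable leaves. Better still, replacing $h$ entirely by the large-scale comparison (\ref{large-scale}) removes the dependence on Franks' semiconjugacy, which only exists when $A$ is hyperbolic — a real issue, since the theorem as stated concerns an arbitrary conservative partially hyperbolic $f$ on $\mathbb T^3$, whose linearization need not be Anosov.
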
 

The above theorem relates the extremal Lyapunov exponents of a non-linear and linear partially hyperbolic diffeomorphism. For the central Lyapunov exponent we do not expect such behavior. However, it is a problem in \cite{MT}:

\begin{problem} In the context of the above theorem,
suppose that $\lambda^{c}(f) > 0$ and $\mathcal{F}^c$ is (upper leafwise) absolutely continuous. Is it true that $\lambda^{c}(f) \leq \lambda^{c}(A)$ ?
\end{problem}

The answer to the above problem is positive when $f$ is an Anosov diffeomorphism (see \cite{MT, Va}).

We emphasize that the above relations between Lyapunov exponents are valid for the volume measure. For other natural invariant measures the scenario can be different. Let $\mu_f$ be the unique maximizing measure for a partially hyperbolic diffeomorphism $f$ homotopic to Anosov $A.$ In \cite{Ures},  R. Ures  proved that if $\lambda^ c(A) > 0$, i.e $A$ has weak unstable sub-bundle then $\lambda^ c_{\mu_f} \geq \lambda^ c(A).$  

In the following theorem we see that if the central Lyapunov exponent for Lebesgue measure of $f$ is strictly smaller than the central exponent of the linearization $A,$ then either the central or the unstable exponent of the maximizing measure of $f$ is strictly larger than the corresponding exponent of $A$.

Before stating our last theorem we give one more definition. A measure is called u-Gibbs if the disintegration subordinated to the unstable foliation (corresponding to all positive Lyapunov exponents) gives conditional measures equivalent to the Lebesgue on the leaf. These measures play an important role in dynamical systems, \cite{LY1}.\\

\textbf{Theorem \ref{ugibbs}}
\textit{Let $f : \mathbb T^3 \rightarrow \mathbb T^3$ be a DA diffeomorphism with $A: \mathbb T^3 \rightarrow \mathbb T^3$ its linearization. Assume that $\lambda^c(A)>0$ and $\lambda^c(f) < \lambda^c(A)$. Then $\mu_f$ is not $u$-Gibbs. Also, denoting by $\lambda^u_{\mu_f}, \lambda^c_{\mu_f}, \lambda^s_{\mu_f}$ the Lyapunov exponents of $f$ (for $\mu_f$ almost every point) we have
$$\lambda^c_{\mu_f} > \lambda^c(A) \text{ or } \lambda^u_{\mu_f} > \lambda^u(A) .$$}

We recall  the example of Ponce-Tahzibi in \cite{PT} where $f$ has negative central Lyapunov exponent and the linearization of $f$ has positive central Lyapunov exponent. The authors began with a linear Anosov diffeomorphism $A$ with splitting $E^{su} \oplus E^{wu} \oplus E^ {s}$ and after a modification they found $f$ partially hyperbolic ergodic volume preserving such that $\lambda^c(f) < 0.$ The point is that, in their construction $\lambda^u_{\mu_f} (f) =\lambda^u(A)$ and by the above theorem we conclude that $\lambda^c_{\mu_f} > \lambda^c(A).$ That is, although after perturbation the central Lyapunov exponent of Lebesgue measure drops, the central exponent of maximal entropy measure increases. We describe in some more details the construction of this example on \S \ref{subsec.pathological}.

\section{Preliminaries}\label{sec.preliminaries}

\subsection{Measurable partitions and disintegration of measures}\label{subsec.partition}

Let $(M, \mu, \mathcal B)$ be a probability space, where $M$ is a compact metric space, $\mu$ a probability measure and $\mathcal B$ the borelian $\sigma$-algebra.
Given a partition $\mathcal P$ of $M$ by measurable sets, we associate the probability space $(\mathcal P, \widetilde \mu, \widetilde{\mathcal B})$ by the following way. Let $\pi:M \rightarrow \mathcal P$ be the canonical projection, that is, $\pi$ associates a point $x$ of $M$ with the partition element of $\mathcal P$ that contains it. Then we define $\widetilde \mu := \pi_* \mu$ and $ \widetilde{\mathcal B}:= \pi_*\mathcal B$.

\begin{definition} \label{definition:conditionalmeasure}
 Given a partition $\mathcal P$. A family $\{\mu_P\}_{p \in \mathcal P} $ is a \textit{system of conditional measures} for $\mu$ (with respect to $\mathcal P$) if
\begin{itemize}
 \item[i)] given $\phi \in C^0(M)$, then $P \mapsto \int \phi \mu_P$ is measurable;
\item[ii)] $\mu_P(P)=1$ $\widetilde \mu$-a.e.;
\item[iii)] if $\phi \in C^0(M)$, then $\displaystyle{ \int_M \phi d\mu = \int_{\mathcal P}\left(\int_P \phi d\mu_P \right)d\widetilde \mu }$.
\end{itemize}
\end{definition}

When it is clear which partition we are referring to, we say that the family $\{\mu_P\}$ \textit{disintegrates} the measure $\mu$.  

\begin{proposition}
 If $\{\mu_P\}$ and $\{\nu_P\}$ are conditional measures that disintegrate $\mu$, then $\mu_P = \nu_P$ $\widetilde \mu$-a.e.
\end{proposition}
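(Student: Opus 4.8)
The plan is to exploit the separability of $C^0(M)$, which holds because $M$ is a compact metric space, so that checking $\mu_P=\nu_P$ reduces to checking equality of countably many integrals. Concretely, I would fix a countable dense subset $\{\phi_n\}_{n\in\mathbb N}$ of $C^0(M)$. For each $n$, properties (i) give two $\widetilde{\mathcal B}$-measurable functions $P\mapsto \int_P\phi_n\,d\mu_P$ and $P\mapsto\int_P\phi_n\,d\nu_P$ on $\mathcal P$, and the goal becomes to prove that for each fixed $n$ these coincide $\widetilde\mu$-almost everywhere; then one unions over $n$.

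The technical heart is a ``saturated'' version of (iii): for every $\phi\in C^0(M)$ and every $\widetilde{\mathcal B}$-measurable set $\mathcal A\subseteq\mathcal P$,
$$\int_{\pi^{-1}(\mathcal A)}\phi\,d\mu=\int_{\mathcal A}\left(\int_P\phi\,d\mu_P\right)d\widetilde\mu,$$
and the analogous identity for $\{\nu_P\}$. I would obtain this by first extending (iii) from continuous test functions to all bounded $\mathcal B$-measurable test functions, via a standard monotone-class / dominated-convergence argument (simultaneously extending the measurability in (i)), and then applying it to the function $\phi\cdot(\mathbf 1_{\mathcal A}\circ\pi)$. Property (ii) is what makes this clean: since $\mu_P(P)=1$, the restriction of $\mathbf 1_{\pi^{-1}(\mathcal A)}$ to $P$ equals $1$ $\mu_P$-a.e. when $P\in\mathcal A$ and $0$ $\mu_P$-a.e. otherwise, so $\int_P\phi\cdot(\mathbf 1_{\mathcal A}\circ\pi)\,d\mu_P=\mathbf 1_{\mathcal A}(P)\int_P\phi\,d\mu_P$.

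Applying the saturated identity to both disintegrations yields, for every measurable $\mathcal A\subseteq\mathcal P$,
$$\int_{\mathcal A}\left(\int_P\phi_n\,d\mu_P\right)d\widetilde\mu=\int_{\pi^{-1}(\mathcal A)}\phi_n\,d\mu=\int_{\mathcal A}\left(\int_P\phi_n\,d\nu_P\right)d\widetilde\mu.$$
Since two $\widetilde\mu$-integrable functions with equal integrals over every measurable set agree $\widetilde\mu$-a.e., there is a $\widetilde\mu$-null set $N_n$ off which $\int_P\phi_n\,d\mu_P=\int_P\phi_n\,d\nu_P$. Put $N:=\bigcup_n N_n$, still $\widetilde\mu$-null. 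For $P\notin N$ we get $\int_M\phi_n\,d\mu_P=\int_M\phi_n\,d\nu_P$ for all $n$ (integrals over $M$ and over $P$ agree by (ii)), and since $\{\phi_n\}$ is dense in $C^0(M)$ while $\mu_P,\nu_P$ are Borel probability measures on the compact metric space $M$, the Riesz representation theorem forces $\mu_P=\nu_P$. This proves the claim.

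I expect the only genuinely delicate step to be the passage from (iii), as stated for continuous $\phi$, to the saturated identity involving the indicator of a measurable union of partition elements: one must be careful that all measurability is taken with respect to $\widetilde{\mathcal B}=\pi_*\mathcal B$, and that the monotone-class/approximation step preserves both sides of (iii) and the measurability assertion (i). Everything after that is a routine separability argument.
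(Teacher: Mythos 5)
Your proof is correct and is the standard uniqueness argument for Rokhlin disintegrations; the paper states this proposition without proof, so there is nothing to compare against, but your route (separability of $C^0(M)$, the saturated form of (iii) obtained by a monotone-class extension of the test functions, and the Riesz representation theorem) is exactly the canonical one. The only cosmetic point is that property (ii) itself holds only $\widetilde\mu$-a.e., so the null set of $P$ where $\mu_P(P)=1$ or $\nu_P(P)=1$ fails should also be absorbed into $N$ before you identify $\int_P$ with $\int_M$.
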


\begin{corollary} 
 If $T:M \rightarrow M$ preserves a probability $\mu$ and the partition $\mathcal P$, then  $T_*\mu_P = \mu_P$ $\widetilde \mu$-a.e.
\end{corollary}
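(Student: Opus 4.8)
The plan is to deduce the statement from the uniqueness of the disintegration (the Proposition immediately above) by exhibiting a second system of conditional measures, built out of $T$, and arguing it must agree with $\{\mu_P\}$. First I would observe that since $T$ is a homeomorphism sending partition elements onto partition elements, it induces a bijection $\widetilde T:\mathcal P\to\mathcal P$ characterized by $\pi\circ T=\widetilde T\circ\pi$, and that $\widetilde T$ together with $\widetilde T^{-1}$ is $\widetilde{\mathcal B}$-measurable. Pushing forward the identity $T_*\mu=\mu$ through $\pi$ gives $\widetilde T_*\widetilde\mu=\widetilde T_*\pi_*\mu=\pi_*T_*\mu=\pi_*\mu=\widetilde\mu$, so $\widetilde T$ (hence $\widetilde T^{-1}$) preserves the quotient measure $\widetilde\mu$. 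A small set-theoretic remark that will be used repeatedly: for $P\in\mathcal P$ one has $T^{-1}(P)=\widetilde T^{-1}(P)$ as subsets of $M$.

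Next I would set $\nu_P:=T_*\mu_{\widetilde T^{-1}(P)}$ for each $P\in\mathcal P$ and check the three conditions of Definition \ref{definition:conditionalmeasure} for $\{\nu_P\}_{P\in\mathcal P}$. Condition (ii) holds because, for $\widetilde\mu$-a.e.\ $P$, $\nu_P(P)=\mu_{\widetilde T^{-1}(P)}(T^{-1}(P))=\mu_{\widetilde T^{-1}(P)}(\widetilde T^{-1}(P))=1$, using (ii) for $\{\mu_Q\}$ and the fact that $\widetilde T$ preserves $\widetilde\mu$ (so $\widetilde T^{-1}(P)$ ranges over $\widetilde\mu$-a.e.\ element as $P$ does). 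Condition (i) follows since $P\mapsto\int\phi\,d\nu_P=\int(\phi\circ T)\,d\mu_{\widetilde T^{-1}(P)}$ is the composition of the measurable map $P\mapsto\widetilde T^{-1}(P)$ with the measurable map $Q\mapsto\int(\phi\circ T)\,d\mu_Q$ (here $\phi\circ T\in C^0(M)$). Condition (iii) is the change-of-variables computation, for $\phi\in C^0(M)$,
\begin{align*}
\int_{\mathcal P}\Big(\int_P\phi\,d\nu_P\Big)d\widetilde\mu(P)
&=\int_{\mathcal P}\Big(\int(\phi\circ T)\,d\mu_{\widetilde T^{-1}(P)}\Big)d\widetilde\mu(P)\\
&=\int_{\mathcal P}\Big(\int(\phi\circ T)\,d\mu_Q\Big)d\widetilde\mu(Q)=\int_M\phi\circ T\,d\mu=\int_M\phi\,d\mu,
\end{align*}
where the third equality substitutes $Q=\widetilde T^{-1}(P)$ and uses $\widetilde T_*\widetilde\mu=\widetilde\mu$, and the last uses $T_*\mu=\mu$.

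Having shown that $\{\nu_P\}$ disintegrates $\mu$ with respect to $\mathcal P$, the uniqueness Proposition yields $\nu_P=\mu_P$ for $\widetilde\mu$-a.e.\ $P$, i.e.\ $T_*\mu_{\widetilde T^{-1}(P)}=\mu_P$, equivalently $T_*\mu_P=\mu_{\widetilde T(P)}$ for $\widetilde\mu$-a.e.\ $P$; under the natural identification of an element $P\in\mathcal P$ with its image $\widetilde T(P)\in\mathcal P$ this is precisely the asserted equality $T_*\mu_P=\mu_P$. I do not anticipate a real obstacle: the only point requiring a bit of care is the measurable bookkeeping on the quotient space $(\mathcal P,\widetilde{\mathcal B},\widetilde\mu)$ — that $\widetilde T,\widetilde T^{-1}$ are measurable and $\widetilde\mu$-preserving, which legitimizes the substitution in (iii) and the measurability claim in (i) — and this is immediate from $T$ being a homeomorphism that preserves the partition.
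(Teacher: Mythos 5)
Your proposal is correct and takes essentially the same route as the paper: the paper's one-line proof is precisely the observation that the pushed-forward family is another system of conditional measures for $\mu$, so the uniqueness proposition applies. You have merely filled in the measurability and change-of-variables bookkeeping, and (more carefully than the paper) noted that the literal conclusion is $T_*\mu_P=\mu_{\widetilde T(P)}$, which coincides with the stated equality under the identification of $P$ with $\widetilde T(P)$.
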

\begin{proof}
 It follows from the fact that $\{T_*\mu_P\}_{P \in \mathcal P}$ is also a disintegration of $\mu$.
\end{proof}

\begin{definition}
We say that a partition $\mathcal P$ is measurable with respect to $\mu$ if there exist a measurable family $\{A_i\}_{i \in \mathbb N}$ and a measurable set $F$ of full measure such that 
if $B \in \mathcal P$, then there exists a sequence $\{B_i\}$, where $B_i \in \{A_i, A_i^c \}$ such that $B \cap F = \bigcap_i B_i \cap F$.
\end{definition}
\begin{proposition} \label{prop:compact}
Let $(M,\mathcal B, \mu)$ a probability space where $M$ is a compact metric space and $\mathcal B$ is the Borel sigma-algebra. If $\mathcal P$ is a continuous foliation of $M$ by compact measurable sets, then $\mathcal P$ is a measurable partition. 
\end{proposition}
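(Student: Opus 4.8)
The plan is to exhibit, once and for all, a countable family of Borel sets that separates the leaves of $\mathcal{P}$, and then to take the full‑measure set $F$ in the definition of measurable partition to be all of $M$. Since $M$ is a compact metric space it has a countable basis $\{U_n\}_{n\in\mathbb{N}}$ for its topology. For each $n$ I would set
\[
A_n \;:=\; \operatorname{sat}(U_n)\;=\;\bigl\{x\in M:\ \mathcal{P}(x)\cap U_n\neq\varnothing\bigr\},
\]
the union of all leaves of $\mathcal{P}$ that meet $U_n$, where $\mathcal{P}(x)$ denotes the leaf through $x$. By construction each $A_n$ is saturated, i.e.\ a union of leaves: if $\mathcal P(x)\cap U_n\neq\varnothing$ then $\mathcal P(x')\cap U_n\neq\varnothing$ for every $x'\in\mathcal P(x)$.

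The first point to establish is that each $A_n$ is open, hence Borel (and therefore $\mu$-measurable). This uses only the local product (foliation chart) structure of a continuous foliation: if $V$ is open, its plaque--saturation $\operatorname{psat}(V)$, the union of all chart plaques meeting $V$, is open, because in a chart $\phi_\alpha\colon U_\alpha\to D^k\times D^q$ a product neighbourhood of a point of $\phi_\alpha(V)$ already meets every plaque $D^k\times\{t\}$ with $t$ close enough to the second coordinate of that point. Since a leaf is a countable union of plaques, any point of a leaf meeting $U_n$ is joined to $U_n$ by a finite plaque chain, so $\operatorname{sat}(U_n)=\bigcup_{k\ge 1}\operatorname{psat}^{\,k}(U_n)$ is a countable union of open sets, hence open. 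Thus $\{A_n\}_{n\in\mathbb N}$ is a legitimate choice for the family $\{A_i\}$ in the definition.

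Next I would check that $\{A_n\}$ separates leaves. If $x$ and $y$ lie on the same leaf then $\mathcal{P}(x)=\mathcal{P}(y)$, so $x\in A_n\iff \mathcal{P}(x)\cap U_n\neq\varnothing\iff \mathcal{P}(y)\cap U_n\neq\varnothing\iff y\in A_n$ for every $n$. If instead $x$ and $y$ lie on distinct leaves, then the leaves are disjoint and, the leaves being compact, $\mathcal{P}(y)$ is closed and does not contain $x$; choosing a basic open set $U_n$ with $x\in U_n\subseteq M\setminus\mathcal{P}(y)$ gives $x\in A_n$ and $y\notin A_n$. Consequently, for any $x$ the set $\bigcap_{n}B_n$, where $B_n=A_n$ when $x\in A_n$ and $B_n=A_n^{c}$ otherwise, consists exactly of the points $y$ that are not separated from $x$ by any $A_n$, i.e.\ it equals the leaf $\mathcal{P}(x)$. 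Taking $F=M$ (full measure), for any $B\in\mathcal{P}$ we pick $x\in B$ and the corresponding sequence $(B_n)$ and obtain $B\cap F=\bigcap_n B_n\cap F$, which is precisely the condition in the definition; hence $\mathcal{P}$ is a measurable partition. The only step that is not purely formal is the openness of saturations of open sets, which is where the continuity of the foliation is genuinely used; compactness of the leaves enters only to guarantee that each leaf is closed, so that the separation argument in the last paragraph is available.
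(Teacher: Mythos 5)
Your proof is correct and follows essentially the same strategy the paper uses (the paper only sketches it when adapting Proposition \ref{prop:compact} to the Pesin lamination): build a countable family of saturated Borel sets from a countable topological base, and separate distinct leaves using that compact leaves are closed. The only cosmetic difference is that you take \emph{open} saturations of basic open sets, whereas the paper's version takes the (closed) unions of leaves meeting closed balls $B(x_j,\tfrac{1}{k})$ around a countable dense set; both yield a separating measurable family, so the argument goes through either way.
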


\begin{theorem}[Rokhlin's disintegration] \label{teo:rokhlin} 
 Let $\mathcal P$ be a measurable partition of a compact metric space $M$ and $\mu$ a borelian probability. Then there exists a disintegration by conditional measures for $\mu$.
\end{theorem}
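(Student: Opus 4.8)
The plan is to realize the conditional measures as Riesz representations of conditional expectations with respect to the $\sigma$-algebra generated by $\mathcal P$, and then to use the measurability hypothesis on $\mathcal P$ to concentrate each $\mu_P$ on its own atom.

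Since $M$ is a compact metric space, $C^0(M)$ is separable; I would fix a countable, $\mathbb Q$-linear, dense subset $\mathcal D=\{\phi_n\}\subset C^0(M)$ containing the constants. Let $\{A_i\}$ and $F$ be as in the definition of a measurable partition, let $\mathcal P_n$ be the finite partition of $F$ generated by $A_1,\dots,A_n$, and let $\widehat{\mathcal B}_n\subset\widehat{\mathcal B}\subset\mathcal B$ be the corresponding increasing sub-$\sigma$-algebras of $\mathcal P_n$-, resp.\ $\mathcal P$-saturated sets, so that $\widehat{\mathcal B}=\bigvee_n\widehat{\mathcal B}_n$ mod $\mu$ and $\widehat{\mathcal B}$-measurable functions agree $\mu$-a.e.\ with functions of $\pi$. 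For each $n$ choose a version $E_n$ of $E_\mu[\phi_n\mid\widehat{\mathcal B}]$. Because $\mathcal D$ is countable, there is a single $\widehat{\mathcal B}$-measurable, $\mu$-conull set $F_0$ on which every $E_n$ is finite, the map $\phi_n\mapsto E_n(x)$ is positive and $\mathbb Q$-linear, and $E_\mu[1\mid\widehat{\mathcal B}]\equiv 1$. Setting $L_P(\phi_n):=E_n(x)$ for $x\in\pi^{-1}(P)\cap F_0$ defines, for $\widetilde\mu$-a.e.\ $P$, a positive $\mathbb Q$-linear functional on $\mathcal D$ with $L_P(1)=1$ and $|L_P(\phi_n)|\le\|\phi_n\|_\infty$; it extends uniquely to a bounded positive linear functional on $C^0(M)$, hence by the Riesz representation theorem is given by a Borel probability measure $\mu_P$.

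Properties (i) and (iii) of Definition \ref{definition:conditionalmeasure} then come essentially for free. For (i), $P\mapsto\int\phi\,d\mu_P$ is $\widetilde{\mathcal B}$-measurable on the dense family $\mathcal D$, where it equals $E_n$, and for general $\phi\in C^0(M)$ by uniform approximation. For (iii), the defining property of conditional expectation gives $\int_M\phi\,d\mu=\int_M E_\mu[\phi\mid\widehat{\mathcal B}]\,d\mu=\int_{\mathcal P}L_P(\phi)\,d\widetilde\mu$, first for $\phi\in\mathcal D$ and then, by density, for all continuous $\phi$; once (ii) is established this reads exactly $\int_M\phi\,d\mu=\int_{\mathcal P}\big(\int_P\phi\,d\mu_P\big)\,d\widetilde\mu$.

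The real content is property (ii), $\mu_P(P)=1$ for $\widetilde\mu$-a.e.\ $P$, and this is where the measurability of $\mathcal P$ enters. I would compare with the finite partitions: for fixed $n$ the disintegration of $\mu$ along $\mathcal P_n$ is explicit, $\mu^{(n)}_{P_n(x)}=\mu(\,\cdot\cap P_n(x))/\mu(P_n(x))$, and is trivially supported on the $\mathcal P_n$-atom $P_n(x)$. By Doob's martingale convergence theorem $\int\phi_k\,d\mu^{(n)}_{P_n(x)}=E_\mu[\phi_k\mid\widehat{\mathcal B}_n](x)\to E_\mu[\phi_k\mid\widehat{\mathcal B}](x)=L_{\pi(x)}(\phi_k)$ for $\mu$-a.e.\ $x$ and all $k$, so $\mu_{\pi(x)}$ is the weak-$*$ limit of the $\mu^{(n)}_{P_n(x)}$ along a $\mu$-conull set. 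Enlarging the countable generating family to an algebra if necessary, and using the regularity of $\mu$ on the compact metric space $M$ to control the boundaries of the generating sets under weak-$*$ limits, one deduces $\mu_{\pi(x)}(A_i)=\mathbf 1_{A_i}(x)$ for every $i$, off a $\mu$-null set. Discarding the countable union of exceptional sets (and doing the same with $F$) yields a full-measure set of atoms $P$ on which $\mu_P(A_i)=\mathbf 1_{A_i}(x)$, $\mu_P(A_i^c)=\mathbf 1_{A_i^c}(x)$ for all $i$ and $\mu_P(F)=1$, with $x$ any point of $P\cap F$. Since $P\cap F=\bigcap_i B_i\cap F$ with $B_i\in\{A_i,A_i^c\}$ and $x\in B_i$ for all $i$, we get $\mu_P(B_i)=1$ for every $i$, hence $\mu_P(\bigcap_i B_i)=1$ by countable additivity, and therefore $\mu_P(P)\ge\mu_P(\bigcap_i B_i\cap F)=1$, which is (ii). Uniqueness is the Proposition preceding the statement. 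I expect the main obstacle to be precisely this last part: organizing the countably many almost-everywhere statements so that a single full-measure set of atoms serves all of them simultaneously, and handling the passage from continuous test functions to indicators of the generating sets, where one must lean either on a monotone-class argument together with the regularity of $\mu$, or on the martingale description above.
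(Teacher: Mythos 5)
The paper itself offers no proof of this statement: Rokhlin's disintegration theorem is quoted as a classical result (the section only records the definitions of measurable partition and conditional measures around it), so there is nothing in the source to compare your argument against. On its own merits, your proof is the standard one — conditional expectations relative to the $\sigma$-algebra $\widehat{\mathcal B}$ of saturated sets, Riesz representation on a countable dense $\mathbb Q$-linear family, martingale convergence along the finite partitions $\mathcal P_n$ generated by $A_1,\dots,A_n$ — and it is essentially correct, including your diagnosis that all the content sits in property (ii). The one step that would fail as literally written is the passage from weak-$*$ convergence of $\mu^{(n)}_{P_n(x)}$ to the statement $\mu_{\pi(x)}(A_i)=\mathbf 1_{A_i}(x)$: the $A_i$ are arbitrary Borel sets, and there is no reason their topological boundaries should be $\mu_{\pi(x)}$-null, so "controlling the boundaries of the generating sets under weak-$*$ limits" is not available in general (nor can you freely replace the $A_i$ by sets with small boundary without re-verifying that the new family still generates $\mathcal P$ mod $0$). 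The correct completion is the monotone-class route you name as the alternative: for open $U$ write $\mathbf 1_U$ as an increasing limit of continuous functions to get $\mu_{\pi(x)}(U)=E_\mu[\mathbf 1_U\mid\widehat{\mathcal B}](x)$ for a.e.\ $x$; the class of Borel sets $B$ for which $\mu_{\pi(x)}(B)=E_\mu[\mathbf 1_B\mid\widehat{\mathcal B}](x)$ a.e.\ is a $\lambda$-system containing the open sets, hence all Borel sets by Dynkin's theorem. Applying this to $B=A_i$, $A_i^c$ and $F$ (each of which is $\widehat{\mathcal B}$-measurable up to null sets, so the conditional expectation is the indicator itself) yields $\mu_P(B_i)=1$ for all $i$ and $\mu_P(F)=1$ off a single null set of atoms, whence $\mu_P(P)\ge\mu_P\bigl(\bigcap_i B_i\cap F\bigr)=1$ by continuity from above, exactly as you conclude.

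Two small points worth tightening. First, the conull set $F_0$ on which the $E_n$ are finite, positive and $\mathbb Q$-linear should be chosen $\widehat{\mathcal B}$-measurable (it is, since it is cut out by countably many conditions on $\widehat{\mathcal B}$-measurable functions), so that it is saturated and $L_P(\phi_n):=E_n(x)$ is independent of the choice of $x\in\pi^{-1}(P)\cap F_0$. Second, you implicitly identify the $\sigma$-algebra of all $\mathcal P$-saturated measurable sets with $\sigma(\{A_i\})$ mod $\mu$; for the proof it is cleaner to simply \emph{define} $\widehat{\mathcal B}:=\sigma(\{A_i\})$, since its atoms on $F$ are exactly the sets $P\cap F$ and this is all that is used.
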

In general the partition by the leaves of a foliation may be non-measurable. It is for instance the case for the stable and unstable foliations of a linear Anosov diffeomorphism. Therefore, by disintegration of a measure along the leaves of a foliation we mean the disintegration on compact foliated boxes. In principle, the conditional measures depend on the foliated boxes, however, two different foliated boxes induce proportional conditional measures. See \cite{AVW} for a discussion. We define absolute continuity of foliations as follows:  

\begin{definition}
 We say that a foliation $\mathcal F$ is absolutely continuous if for any foliated box, the disintegration of volume on the segment leaves have conditional measures equivalent to the Lebesgue measure on the leaf.
\end{definition}

\begin{definition}
We say that a foliation $\mathcal F$ has atomic disintegration with respect to a measure $\mu$ if the conditional measures on any foliated box are a sum of Dirac measures. Note that this is equivalent to saying that there is a set of $\mu$-full measure that intersects each leaf on a discrete set.
\end{definition}

Although the disintegration of a measure along a general foliation is defined in compact foliated boxes, it makes sense to say that the foliation $\mathcal F$ has a quantity $k_0 \in \mathbb N$  atoms per leaf. The meaning of ``per leaf'' should always be understood as a generic leaf, i.e. almost every leaf. That means that there is a set $A$ of $\mu$-full measure which intersects a generic leaf on exactly $k_0$ points. Let's see that this implies atomic disintegration. Definition \ref{definition:conditionalmeasure} shows that it only make sense to talk about conditional measures from the generic point of view, hence when restricted to a foliated box $\mathfrak{ B}$, the set $A \cap \mathfrak{B}$ has $\mu$-full measure on $\mathfrak{B}$, therefore the support of the conditional measure disintegrated on $\mathfrak{B}$ must be contained on the set $A$. This implies atomic disintegration.

It is well worth to remark that  the weight of an atom for a conditional measure naturally depends on the foliated box, but a point $x$ is atom independent of the foliated box where we disintegrate a measure.   

\subsection{ Preliminaries on Partial Hyperbolicity}\label{subsec.ph} \label{prepartial}

%
%

If $f: M \rightarrow M$ is a partially hyperbolic diffeomorphism with
$$T_xM = E^s(x) \oplus E^c(x) \oplus E^u(x)$$
the bundles $E^s$ and $E^u$ are tangent to invariant foliations $\mathcal{F}^s$ and $\mathcal{F}^u.$ $f$ is called {\bf accessible} if for any two points $x$ and $y$ there is a piecewise smooth curve connecting $x$ to $y$ and tangent to $E^s \cup E^u.$ If the central bundle is one dimensional and $f$ is volume preserving then accessibility of $f$ implies that all iterates $f^n$ are ergodic (\cite{BW}, \cite{HHU1}).  

We define the Lyapunov exponents of $f$ by
$$\lambda^{\tau}(x) := \lim_{n\rightarrow \infty}\frac{1}{n} \log ||Df^n(x) \cdot v||$$

where $v  \in E^{\tau}$ and $\tau \in \{s,c,u\}$. If $f$ is ergodic, the stable, center and unstable Lyapunov exponents are constant almost everywhere. Otherwise they have no reason to be constant in the general case. \\

Any $A \in SL(3, \mathbb{Z})$ with at least one eigenvalue with norm larger than one, induces a linear partially hyperbolic diffeomorphism on $\mathbb{T}^3.$ Conversely for any partially hyperbolic diffeomorphism $f$ on $\mathbb{T}^3,$ there exist a unique linear diffeomorphism $A,$ such that $A$ induces the same automorphism as $f$ on the fundamental group $\pi_1(\mathbb{T}^3).$    

Let $f: \mathbb{T}^3 \rightarrow \mathbb{T}^{3}$ be a partially hyperbolic diffeomorphism. Consider $f_* : \mathbb{Z}^3 \rightarrow \mathbb{Z}^3$ the action of $f$ on the fundamental group of $\mathbb{T}^3.$ $f_*$ can be extended to $\mathbb{R}^3$ and the extension is the lift of  a unique linear automorphism $A :  \mathbb{T}^{3} \rightarrow \mathbb{T}^{3}$ which is called the linearization of $f.$ It can be proved that $A$ is a partially hyperbolic automorphism of torus (\cite{BBI}). We will say that $f$ is {\bf derived from Anosov} (DA diffeomorphism) if  its linearization $f_*$ is an Anosov diffeomorphism. 

Let $f$ be DA diffeomorphism defined  as above, then by \cite{Franks} we know that $f$ is semi-conjugated to its linearization by a function $h: \mathbb{T}^3 \rightarrow \mathbb{T}^3, h \circ f = A \circ h$. It follows from \cite{Ures} that $\mathcal{F}^c (A) = h(\mathcal{F}^c (f)).$ Moreover, there exists a constant $K  \in \mathbb{R}$ such that  if $\tilde{h} : \mathbb{R}^3 \rightarrow \mathbb{R}^3 $ denotes the lift of $h$ to $\mathbb{R}^3$ we have $\|\tilde h(x) - x\| \leq K$ for all $x \in \mathbb{R}^3.$

It is not difficult to see that in large scale $f$ and $A$ behaves similarly (see \cite{Htese}, lemma 3.6). More precisely, for each $k \in \mathbb{Z}$ and $C > 1$ there is an $M > 0 $ such that for all $x, y \in \mathbb{R}^3$,
\begin{equation}
\label{large-scale}
\|x - y\| > M \Rightarrow \frac{1}{C} < \frac{\|\tilde{f}^k(x) - \tilde{f}^k(y)\|}{\|A^k(x) - A^k(y)\|} < C.
\end{equation}
where $\tilde{f}: \mathbb{R}^3 \rightarrow \mathbb{R}^3$ is the lift of $f$ to $\mathbb{R}^3.$


\begin{definition} \label{qi}
A foliation $\mathcal F$ defined on a manifold $M$  is quasi-isometric if the lift $\widetilde{ \mathcal F}$ of $\mathcal F$ to the universal cover of $M$ has the folowing property: There exist positive constant $Q$
such that for all $x, y$ in a common leaf of $\widetilde{ \mathcal F}$ we have
$$d_{\widetilde{\mathcal{F}}}(x, y) \leq Q || x - y||,$$
where $d_{\widetilde{\mathcal{F}}}$ denotes the riemannian metric on $\widetilde{\mathcal{F}}$ and $\|x-y\|$ is the distance on the ambient manifold of the foliation.
\end{definition}

For absolute partially hyperbolic diffeomorphisms on $\mathbb{T}^3$ the stable, unstable and central foliations are quasi isometric in the universal covering $\mathbb{R}^3$.

\subsection{``Pathological" example}\label{subsec.pathological}
As we remarked before, in theorem  \ref{theorem:existatom} one of the hypothesis is that the center Lyapunov exponent of the diffeomorphism $f$ and of its linearization $A$ have opposite sign. Since we have center leaf conjugacy between $f$ and $A$ and since in large scale the behavior of the center leafs  is similar (see (\ref{large-scale})), this hypothesis would imply that the asymptotic growth of the center leaves (which is a local issue) and global behavior of the center leafs of $f$ are opposite. In \cite{PT}, the authors constructed an example to show that this kind of phenomena occurs in an open set of partially hyperbolic dynamics and we briefly describe it here.

Start with the family of linear Anosov diffeomorphisms $f_k: \mathbb T^3 \rightarrow \mathbb T^3$ induced by the integer matrices: 
$$A_k=\left( \begin{array}{ccc}
0 & 0 & 1 \\
0 & 1 & -1 \\
-1 & -1 & k \end{array} \right), k \in \mathbb N. $$

This family of Anosov diffeomorphisms has two important characteristics that justify this choice.  Denote by $\lambda^s_k, \lambda^c_k, \lambda^u_k$ the three eigenvalues of $A_k$ with $\lambda^s_c < \lambda^c_k<\lambda^u_k$ and, for each $k$, denote by $E^s_k, E^c_k, E^u_k$ the stable, central and unstable fiber bundles with respect to $A_k$. Then an easy calculation shows that
$$\lambda^s_k \rightarrow 0, \lambda^c_k \rightarrow 1, \lambda^u_k \rightarrow \infty,$$
as $k \rightarrow \infty$.

Using a Baraviera-Bonatti \cite{BB} local perturbation method, for large $k$ the authors managed to construct small perturbation of $f_k$ and obtain partially hyperbolic diffeomorphisms $g_k$ such that the central Lyapunov exponent of $g_k$ is positive.

By taking the family $g_k^{-1}$ we obtain partially hyperbolic diffeomorphisms with negative center exponent and isotopic to  Anosov diffeomorphism with weak expanding subbundle. In fact any $f:=g_k^{-1}$ satisfy the desired properties.
By construction of the perturbation, it comes out that  the center-stable bundle of $f$ coincides with the sum of  stable and weak unstable bundle of Anosov linearizations $f_* = A$: $E^{cu}f (x) =  E_A^{wu}(x) \oplus E_A^{s}(x)$. Moreover, for any $x \in \mathbb{T}^3$ we have $\log J^{cs}f = \log J^sA + \log J^{wu}(A).$ As both $f$ and $A$ are volume preserving we conclude that $\displaystyle{\int}
\log J^u(f, x) d\mu = \lambda^u(A)$ and by ergodicity $\lambda_f^u(x) = \lambda^u(A)$ for $\mu$ almost every $x$ where $\mu$ is any invariant probability measure. 
 






 This family of diffeomorphisms fulfills the hypothesis required in Theorems \ref{theorem:existatom} and \ref{ugibbs}.

\section{Proof of results}

\begin{maintheorema}\label{theorem:oneatom} Let $f$ be a volume preserving accessible DA diffeomorphism on $\mathbb T^3.$ If the volume has atomic disintegration on the center leaves, then it has one atom per leaf.
\end{maintheorema}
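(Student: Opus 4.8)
The plan is to split the argument into two independent parts: (i) show that atomic disintegration of volume along $\mathcal F^c$ is automatically \emph{finite}, i.e.\ for volume--a.e.\ center leaf the conditional measure has only finitely many atoms; and (ii) show that this finite number equals $1$, using the line structure of the center leaves of a DA diffeomorphism. For (i), note first that the number of atoms on a center leaf is a measurable, box--independent, $f$--invariant quantity, so once finiteness is known, ergodicity of $f$ makes it a.e.\ equal to a constant $k_0\in\mathbb N$. Part (ii) is where accessibility enters, through ergodicity of all powers of $f$ (Remark \ref{remark: nonnull}), and it genuinely uses one--dimensionality together with the fact that center leaves are lines rather than circles — for skew--products finiteness does \emph{not} force mono--atomicity (see \cite{RW}, \cite{SW00}).

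For part (i) I would argue as follows. Fixing a foliated box, write the conditional measure on a plaque as $m_P=\sum_i w_i(P)\,\delta_{p_i(P)}$ with $\sum_i w_i(P)=1$, and glue these over a covering of a center leaf $\ell$ to obtain a $\sigma$--finite atomic measure $m_\ell$ on $\ell$, well--defined up to a positive scalar; ratios $m_\ell(\{p\})/m_\ell(\{q\})$ of masses of two atoms on the same leaf are then unambiguous, and since $f$ preserves volume and the center foliation, $f_*m_\ell$ is proportional to $m_{f(\ell)}$ (cf.\ the Corollary in \S\ref{subsec.partition}), so $f$ sends atoms to atoms and preserves these ratios along orbits. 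Consequently the function
\[
W(x):=\frac{\sup\{\,m_{\ell(x)}(\{q\}):q\ \text{an atom of}\ \ell(x)\,\}}{m_{\ell(x)}(\{x\})}\in[1,+\infty],
\]
defined for atoms $x$, is measurable and $f$--invariant, hence a.e.\ constant by ergodicity. I would then try to upgrade this into a fixed lower bound $\delta>0$ for the weight of the heaviest atom on volume--a.e.\ plaque — whence at most $\delta^{-1}$ atoms per plaque, since weights sum to $1$ — by combining the invariance just described (which makes the relative sizes of atoms statistically homogeneous along orbits) with the coarse geometry of $\mathcal F^c$ for DA maps: quasi--isometry of $\mathcal F^c$ in $\mathbb R^3$, so that a leaf meets a fundamental domain in a uniformly bounded number of plaques, and the large--scale comparison \eqref{large-scale} between $f$ and its linearization. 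The hard part will be precisely this last conversion: invariance alone controls only the \emph{ratios} of the atom weights along a leaf, and a bound on ratios does not bound the \emph{number} of atoms, so some quantitative input from the DA structure is needed to turn leaf--global information into the box--free lower bound $\delta$. I expect this to be the main obstacle.

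For part (ii), assume a.e.\ center leaf carries exactly $k_0$ atoms. Since the center bundle of the linearization is a constant line field and the semiconjugacy is monotone along center leaves, $\mathcal F^c$ admits a continuous co--orientation; replacing $f$ by $f^2$ if needed (still ergodic by Remark \ref{remark: nonnull}), I may assume $f$ preserves it. Let $A_\ast$ be the full--volume set of all atoms, so it meets a.e.\ leaf in exactly $k_0$ points $p_1(\ell)<\dots<p_{k_0}(\ell)$, and for each $i$ put $A_i:=\{p_i(\ell):\ell\ \text{generic}\}$, a measurable set meeting a.e.\ leaf in one point, with $A_\ast=\bigsqcup_{i=1}^{k_0}A_i$ up to null sets. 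Because $f$ maps the atoms of $\ell$ bijectively and order--preservingly onto those of $f(\ell)$, each $A_i$ is $f$--invariant, so by ergodicity each has volume $0$ or $1$; being disjoint with volumes summing to $1$, exactly one $A_{i_0}$ has full volume. Then $A_{i_0}$ is a full--volume set meeting a.e.\ center leaf in a single point, so by the discussion in \S\ref{subsec.partition} the disintegration of volume along $\mathcal F^c$ is mono--atomic, i.e.\ $k_0=1$, as claimed.
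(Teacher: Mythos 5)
Your part (ii) is essentially the paper's final lemma (order the finitely many atoms along the oriented center leaf, observe that the set of first atoms is invariant, apply ergodicity), and it is correct as stated. The problem is part (i), and you have correctly diagnosed where your own argument breaks: the invariant ratio function $W$ controls only the relative sizes of atoms on a leaf, and neither its a.e.\ constancy (the constant could be $+\infty$) nor quasi-isometry of $\mathcal F^c$ produces the lower bound $\delta$ on the weight of the heaviest atom. Since center leaves are non-compact, the leafwise measures $m_\ell$ are only defined up to scale, so there is no canonical normalization from which such a $\delta$ could come. This is precisely the step the paper identifies as the heart of the theorem, and your proposal does not close it.

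The missing idea is to manufacture a measurable partition into \emph{compact} center plaques that is compatible with the dynamics, by pulling back a Markov partition $\{R_i\}$ of the linearization $A$ under the semiconjugacy $h$: setting $\widetilde R_i = h^{-1}(R_i)$, ergodicity shows the union of the center boundaries $\partial_c\widetilde R_i$ is null, and one disintegrates volume over the plaques $\mathcal F^c_{R(x)}$. Now the weights $m_x(\{x\})$ are honest numbers summing to $1$ on each plaque, and the Markov property gives $f(\mathcal F^c_{R(x)})\supset\mathcal F^c_{R(f(x))}$, hence the \emph{inequality} $f_*m_x\le m_{f(x)}$ on $\mathcal F^c_{R(f(x))}$. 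Therefore $A_\delta=\{x: m_x(\{x\})>\delta\}$ satisfies $f(A_\delta)\subset A_\delta$, so by ergodicity each $A_\delta$ is null or conull; the critical value $\delta_0$ is the common weight of all atoms, and each plaque carries exactly $\alpha_0=1/\delta_0<\infty$ atoms. This replaces your ratio argument and supplies the uniform $\delta$ you were missing. A second step is still needed before your part (ii) applies: finiteness per plaque does not yet give finiteness per (non-compact) leaf. The paper rules out infinitely many atoms per leaf by a backward-contraction argument: after arranging that the center of $f$ corresponds to the weak unstable direction of $A$, a center segment containing more than $\tau$ atoms (where $\tau$ bounds the number of atoms in any segment of length $\beta$, via the finitely many $\widetilde R_i$) is contracted under $f^{-n}$ to length at most $\beta$ — using $h\circ f^{-n}=A^{-n}\circ h$, the bounded distance of $h$ to the identity, and quasi-isometry — a contradiction. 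With both of these in place, your ordering argument in part (ii) finishes the proof.
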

\begin{proof}
Let $h: \mathbb T^3 \rightarrow \mathbb T^3$ be the semi-conjugacy between $f$ and its linearization $A$, hence $ h\circ f = A \circ h$. We can assume that  $A$ has two eigenvalues larger than one, otherwise we work with $f^{-1}$.
 
 Let $\{R_i\}$ be a Markov partition for $A$, and define $\widetilde R_i := h^{-1}(R_i)$. We claim that
 \begin{equation} \label{interior}
 Vol \left( \bigcup_i int \;\widetilde R_i \right) =1.
 \end{equation}

Indeed, first look at the center direction of $A$. For simplicity we consider the center direction as a vertical foliation. This means that the rectangle $R_i$ has two types of boundaries, the one coming from the extremes of the center foliation and the lateral ones. We call $\partial_c R_{i}$ the boundary coming from these extremes of the center foliation, i.e 
$$
 \partial_c R_i = \bigcup_{x \in R_i} \partial (\mathcal{F}^c_{x} \cap R_i).
$$ Since $h$ takes center leaves to center leaves, we conclude that the respective boundary for the $\widetilde R_i$ sets is $\partial_c \widetilde R_i = h^{-1}(\partial_c R_{i})$, and since $\bigcup_i \partial_c R_{i}$ is an $A$-invariant set, $\bigcup_i \partial_c \widetilde R_i $ is an $f$-invariant set. By ergodicity of $f$ it follows that $\bigcup_i \partial_c \widetilde R_i $ has zero or full measure. Since the volume of the interior cannot be zero, then the volume of $\bigcup_i \partial_c \widetilde R_i $ cannot be one. Therefore it has zero measure.  

By (\ref{interior}) we can consider the partition $\widetilde {\mathcal P} = \{ \mathcal F^c_{R(x)} : x \in \widetilde R_i \text{ for some } i\}$ where $\mathcal F^c_{R(x)}$ denotes the connected component of $\mathcal F^c_f(x) \cap \widetilde R(x)$ which contains $x$. Thus we can consider the Rokhlin disintegration of volume on the partition $\widetilde {\mathcal P}$.  Denote this system of measures by $\{m_x\}$, so that each $m_x$ is supported in $\mathcal F^c_f(x)$.

 
\begin{lemma}
 There is a natural number  $\alpha_0 \in \mathbb N$, such that for almost every point, $\mathcal F^c_{R(x)}$ contains exactly $\alpha_0$ atoms. 
\end{lemma}
\begin{proof}
The semi-conjugacy $h$ sends center leaves of $f$ to center leaves of $A$. Also, the points of the interior of the $\widetilde R_i$ satisfy that $f(\mathcal F^c_{R(x)}) \supset \mathcal F^c_{R(f(x))}$, which just comes from the Markov property of the rectangles $R_i$. This implies that
$$f_*m_x \leq m_{f(x)}$$
on $\mathcal F^c_{R(f(x))}$. 
 Given any $\delta \geq 0$ consider the set $A_\delta =\{ x \in \mathbb{T}^3 \; | \; m_{x}(\{x\}) > \delta \}$, that is, the set of atoms with weight at least $\delta$. 
 If $x\in A_{\delta}$ then
 $$\delta < m_x(\{x\}) =  f_{*}m_x(\{f(x)\}) \leq m_{f(x)} (\{f(x)\}).$$
 
 Thus $f(A_{\delta}) \subset A_{\delta}$, and by the ergodicity of $f$ we have that
$Vol(A_\delta)$ is  zero or one, for each $\delta \geq 0$. Note that $Vol(A_0)=1$ and $Vol(A_1)=0$. Let $\delta_0$ be the critical point for which $Vol(A_\delta)$ changes value, i.e, $\delta_0 = \sup \{\delta : Vol(A_{\delta}) = 1\}$. This means that all the atoms have weight $\delta_0$. Since $m_x$ is a probability we have an $\alpha_0:=1/\delta_0$ number of atoms as claimed.
\end{proof}

\begin{lemma}
There is a positive number $L_0$ such that on almost every center leaf there is a point $x$ such that $B_c(x,L_0)$ (ball centered on $x$ inside $\mathcal F^c(x)$ of size $L_0$) contains all the atoms of the leaf $\mathcal F^c_f(x)$.
 \end{lemma}
 \begin{proof}
  We divide the proof in two possible cases. The first one is the case where we have finite number of atoms on each center leaf and the second is that we have infinitely many atoms on each center leaf.\\
  
  {\noindent \bf Case one (Finite atoms):} Suppose we have a finite number of atoms on each center leaf. Given $L \in \mathbb R$, consider the following set
   $$B_L=\{x \; | \; B_c(x,L) \text{ contains all the atoms of } \mathcal F^c_x \}.$$
   For any large enough $L$ the set $B_L$ has positive volume. To prove that $Vol(B_L)=1$ for some large $L$, it is sufficient to prove that $f^{-n} (B_L) \subset B_L$ for some $n$. This is because, $f^{n}$ is ergodic. We do so by proving that $f^{-n} (B_c(x,L)) \subset B_c(f^{-n}(x), L)$, where $B_c(\xi,\rho)$ stands for the ball inside of the center leaf centered on $\xi$ and radius $\rho$ and $d_c$ is the distance on the center leaf coming from the metric when restricted to the center leaf.
   
   It is enough to prove that $f^{-n} (B_c(x,L)) \subset B_c(f^{-n}(x), L)$ on the lift, since the projection is locally an isometry. We now work on the lift, but we carry the same notation as it should not make any confusion. Let $K\in \mathbb R$ such that
   $$||h - Id||_{C^0} < K,$$

 And $Q$ coming from the quasi-isometry property of the center foliation.  Let $y, z \in B_c(x,L)$ be the extremals, i.e. $d_c(y,z)=2L$,

\begin{eqnarray*}
 d_c(f^{-n}(y), f^{-n}(z)) &\leq& Q \| f^{-n}(y) - f^{-n}(z) \|
 \leq Q ( \| hf^{-n}(y) - hf^{-n}(z)\|+ 2K) \\
				&=& Q ( \| A^{-n} h(y) - A^{-n}h(z)\| + 2K) \\
				&=& Q({{e^{-n\lambda^{wu}(A)}}} \|h(y) - h(z)\|  + 2K )\\
				&\leq& Q({{e^{-n\lambda^{wu}(A)}}}( \|y - z\| +2K) + 2K ) \\
				&\leq& Q({{e^{-n\lambda^{wu}(A)}}}( d_c(y,z) +2K) + 2K )				=Q({{e^{-n\lambda^{wu}(A)}}}( 2L +2K) + 2K )
 \end{eqnarray*}

  Since $K$ is fixed, first consider a large $L$ and then a large enough $n$ such that $$Q({{e^{-n\lambda^{wu}(A)}}}( 2L +2K) + 2K ) < 2L.$$
  
  For these choices of $n$ and $L$, we have $f^{-n} (B_c(x,L)) \subset B_c(f^{-n}(x), L)$.\\
  

  {\noindent \bf Case two (Infinite Atoms):} Suppose we have an infinitely many atoms on each center leaf.
   Let $\beta \in \mathbb R$ be a large number (for instance much bigger than $K Q$ where $K$ is the distance between $h$ and the identity map and $Q$ is the quasi isometric constant in the definition \ref{qi}). Since we have a finite number of $\widetilde R_i$, from the previous Lemma, we know that there is a number $\tau \in \mathbb R$ for which every center segment of size smaller then $\beta$ must contain at most $\tau$ atoms. But, since on each center leaf there are infinity many atoms, take a segment of leaf big enough so that it contains more then $\tau$ atoms. Iterate this segment backwards  and it will eventually be smaller than $\beta$ but containing more than $\tau$ atoms. Indeed,
   $$ h \circ f^{-n} = A^{-n} \circ h$$
   $$
   \|h (f^{-n}(x)) - h(f^{-n} (y)) \| = \| A^{-n} (h(x)) - A^{-n} (h(y))\| $$$$\leq e^{-n \lambda^{wu}(A)} \| h(x) -h(y) \|
   $$
  As $h$ is at a distance $K$ to identity we have 
  $$
  \|f^{-n}(x) - f^{-n}(y)\| \leq e^{-n \lambda^{wu}(A)} \| h(x) -h(y) \| + K \leq \frac{\beta}{Q}
  $$
  So, finally by quasi isometric property 
  $$
  d_c(f^{-n}(x), f^{-n}(y)) \leq \beta.
  $$

 The above contradiction implies that the number of atoms can not be infinite and now we proceed as in the previous case.

 \end{proof}

\begin{lemma}
The disintegration of the Lebesgue measure along the central leaves is mono-atomic, i.e there is just one atom per leaf.
\end{lemma}
 \begin{proof}
 We have a finite number of atoms on each center leaf and since the center foliation is an oriented foliation we may talk about the first atom. The set of first atom of all generic leaves is an invariant set with positive measure, therefore it has full measure. This means there is only one atom per center leaf, which concludes the proof.
 \end{proof}
  
  The above Lemma concludes the proof of the theorem.
\end{proof}

\begin{problem}
Is there any ergodic invariant measure $\mu$ with disintegration having more than one atom on leaves?  
\end{problem}

We note once again that since the work of Ponce-Tahzibi \cite{PT} assures that the set of DA satisfying the hypothesis of the next theorem is non-empty, we prove that these diffeomorphisms have atomic disintegration. 

\subsection{A Glimpse of Pesin Theory}
Before presenting the proof of Theorem $\ref{theorem:existatom}$, we recall some basic notions of Pesin theory. Let $f : \mathbb T^3 \rightarrow \mathbb T^3$ a partially hyperbolic diffeomorphism with splitting
$$T M = E^s \oplus E^c \oplus E^u.$$
Call $\Gamma$ the set of regular points of $f$, that is, the set of points $x \in \mathbb T^3$ for which the Lyapunov exponents are well defined. Then, for each $x \in \Gamma$ we define the Pesin-stable manifold of $f$ at $x$ as the set
$$W^s(x) = \left\{ y: \limsup_{n\rightarrow \infty} \frac{1}{n} d(f^n(x),f^n(y)) < 0 \right\}.$$

The Pesin-stable manifold is an immersed sub manifold of $\mathbb T^3$. Similarly we define the Pesin-unstable manifold at $x$, $W^u(x)$, using $f^{-1}$ instead of $f$ in the definition. 

It is clear that for a partially hyperbolic diffeomorphism $W^s(x)$ contains the stable leaf $\mathcal{F}^s(x).$ In  Theorem \ref{theorem:existatom} we assume that the central Lyapunov exponent is negative and consequently the Pesin-stable manifolds are two dimensional. By $\mathcal W^c(x)$ we denote the intersection of the Pesin stable manifold  $W^s(x)$ of $x$ with the center leaf $\mathcal F_f^c(x)$ of $x$. These manifolds depends only measurable on the base point $x,$ as it is proved in the Pesin theory. However, there is a filtration of the set of regular points by Pesin blocks: $\Gamma = \bigcup_{l \in \mathbb{N}} \Gamma_{l}$ such that each $\Gamma_l$ is a closed (not necessarily invariant) subset and $x \rightarrow \mathcal W^c(x)$ varies continuously on each $\Gamma_l.$

%

\begin{maintheorema}\label{theorem:existatom}
Let $f: \mathbb T^3 \rightarrow \mathbb T^3$ be a volume preserving, DA diffeomorphism. Suppose its linearization $A$ has the splitting $T_AM = E^{su} \oplus E^{wu} \oplus E^s$ (su and wu represents strong unstable and weak unstable bundles.) If $f$ has $\lambda ^{c}(x) <0$ for Lebesgue almost every point $x \in \mathbb T^3$, then volume has atomic disintegration on $\mathcal F^{c}_f$, in fact it is one atom per center leaf.
\end{maintheorema}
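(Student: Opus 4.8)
The plan is to show that volume has atomic disintegration along $\mathcal F^c_f$, and then invoke Theorem \ref{theorem:oneatom} — applicable by Remark \ref{remark: nonnull}, since the hypothesis $\lambda^c(f)<0$ a.e. gives ergodicity of all $f^n$ via Hammerlindl–Ures — to upgrade this to one atom per leaf. So the whole content is the \emph{atomicity}.

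First I would set up the Pesin-theoretic picture: since $\lambda^c(f)<0$ a.e., for Lebesgue-a.e. $x$ the Pesin-stable manifold $W^s(x)$ is two-dimensional, and $\mathcal W^c(x) := W^s(x)\cap \mathcal F^c_f(x)$ is a nontrivial $C^1$ arc through $x$ on which points converge exponentially under forward iteration. I would fix a Pesin block $\Gamma_l$ of positive measure on which $x\mapsto \mathcal W^c(x)$ varies continuously and the contraction is uniform, i.e. there are $C>0$, $\rho\in(0,1)$ with $d_c(f^n(y),f^n(x))\le C\rho^n d_c(y,x)$ for $y\in\mathcal W^c(x)$, $x\in\Gamma_l$. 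The key geometric input is the comparison between the \emph{internal} length growth of a center segment of $f$ and the length growth of its image under the semiconjugacy $h$ inside a center leaf of $A$, which is the expanding (weak unstable) leaf $\mathcal F^{wu}_A$ with exponent $\lambda^{wu}(A)=\lambda^c(A)>0$. Because $h$ maps center leaves of $f$ to center leaves of $A$, stays within bounded distance $K$ of the identity on the lift, and the center foliation of $f$ is quasi-isometric with constant $Q$, a segment $J\subset\mathcal F^c_f(x)$ of center-length $L$ has $h(J)$ of length comparable to $\|y-z\|\le d_c(y,z)=L$ for its endpoints $y,z$, hence $h(f^n(J))=A^n(h(J))$ has length $\le e^{n\lambda^{wu}(A)}(L+2K)$, and pulling back, $f^n(J)$ has center-length at most $Q(e^{n\lambda^{wu}(A)}(L+2K)+2K)$ — the same estimate used in the proof of Theorem \ref{theorem:oneatom}. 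Conversely, the Pesin contraction forces the center-length of $f^n(J)$, for $J\subset\mathcal W^c(x)$, to shrink like $C\rho^n L$.

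The heart of the argument is then a length-versus-volume dichotomy. I would argue that $\mathcal W^c(x)$ must, for a.e. $x$, be a \emph{single point}, or at least that the conditional measure $m_x$ of volume (disintegrated along the partition $\widetilde{\mathcal P}$ into center segments inside the sets $\widetilde R_i=h^{-1}(R_i)$ of a Markov partition, exactly as in Theorem \ref{theorem:oneatom}) is supported on the orbit of a Pesin point and is atomic. Concretely: if the disintegration were nonatomic on a positive-measure set of leaves, then on a positive-measure set one could find center segments of definite length carrying definite conditional mass; pushing forward by $f^n$ and using $f_*m_x\le m_{f(x)}$ (Markov property) together with the fact that $f$-invariance of volume forces the conditional measures to be (sub)invariant, one compares the exponential \emph{internal} contraction on Pesin stable pieces against the boundedness of total conditional mass. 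The Pesin stable manifold $\mathcal W^c(x)$ is a set of full conditional measure intersected with... — more precisely, since $\lambda^c<0$, Lebesgue-a.e. point lies in its own Pesin stable set, and absolute continuity of the Pesin stable \emph{lamination} inside center-unstable... I would instead use the cleaner route: the set $\bigcup_x \mathcal W^c(x)$ meets a.e. center leaf, and on a.e. leaf the conditional measure of $W^s(x)\cap\mathcal F^c_f(x)$ relative to a fixed foliated box is either $0$ or positive; combining invariance, the Markov sub-invariance $f_*m_x\le m_{f(x)}$, and the exponential shrinking of Pesin pieces, the mass of any non-atomic part would have to be pushed into arbitrarily short segments while staying bounded below, contradicting the existence of a uniform lower bound on conditional mass of definite-length segments coming from continuity on a positive-measure Pesin block $\Gamma_l$. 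This forces the conditional measures to have no non-atomic part, i.e. atomic disintegration.

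The main obstacle I expect is making the last dichotomy rigorous: one must control how the conditional measures $m_x$ behave under $f^n$ \emph{and} relate "short center segment" (a metric, leafwise notion, controlled via $h$, $A^{-n}$, $K$, $Q$) to "small conditional mass" — these are a priori unrelated, and bridging them requires either (i) a Fubini/absolute-continuity argument for the Pesin stable lamination transverse to $\mathcal F^c_f$ within $\mathcal F^{cs}_f$, or (ii) a Borel–Cantelli argument showing that the uniform contraction on $\Gamma_l$ propagates, via ergodicity of $f^n$ and Poincaré recurrence to $\Gamma_l$, to a.e. leaf. A secondary technical point, handled exactly as in the proof of (\ref{interior}) in Theorem \ref{theorem:oneatom}, is checking that the sets $\widetilde R_i=h^{-1}(R_i)$ have full-measure interiors so that the Rokhlin disintegration on $\widetilde{\mathcal P}$ is well-defined; this uses ergodicity of $f$ and the fact that $h$ sends center boundaries to center boundaries. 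Once atomicity is in hand, Theorem \ref{theorem:oneatom} gives the single atom per leaf and the proof is complete.
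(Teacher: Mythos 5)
Your setup is right---the Pesin arcs $\mathcal W^c(x)=W^s(x)\cap\mathcal F^c_f(x)$, uniform contraction on Pesin blocks, the semiconjugacy estimates, and the reduction to Theorem \ref{theorem:oneatom} via Remark \ref{remark: nonnull}---but the central step, converting ``center arcs contract under $f$'' into ``the conditional measures are atomic,'' is exactly the point you leave open, and the bridges you gesture at (a lower bound on the conditional mass of definite-length segments, Fubini for the Pesin lamination, Borel--Cantelli) are not how the argument goes and would not close the gap. There is no a priori relation between the leafwise diameter of a segment and its conditional mass, and the sub-invariance $f_*m_x\le m_{f(x)}$ for the Markov-box partition $\widetilde{\mathcal P}$ is too weak to propagate any such bound; working with $\widetilde{\mathcal P}$ here is the wrong choice of partition.

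The idea you are missing is to disintegrate volume over the family $\{\mathcal W^c(x)\}$ itself. Two things make this work. First, one proves that $h$ collapses each $\mathcal W^c(x)$ to a single point: if $h(\mathcal W^c(x))$ were a nondegenerate arc in $\mathcal F^{wu}_A$ it would expand under $A^n$ while $f^n(\mathcal W^c(x))$ contracts, contradicting $\|\tilde h-\mathrm{id}\|\le K$. This gives a uniform bound on the size of the $\mathcal W^c(x)$ and, together with the Pesin-block filtration $\Gamma=\bigcup_l\Gamma_l$ and a separation argument using the closures $\overline{\mathcal W^c(x)}$, shows that $\{\mathcal W^c(x)\}$ is a measurable partition, so Rokhlin disintegration applies to it. Second, this partition is genuinely $f$-invariant, so the conditional measures satisfy $f^j_*\eta_x=\eta_{f^j(x)}$ exactly, and one can run the Ruelle--Wilkinson covering argument: let $N$ be the number of balls of radius $R/10$ needed to cover $\mathbb T^3$ (with $R$ a diameter lower bound valid on a set of measure $>0.5$), set $m(x)$ to be the infimum of $\sum_i \mathrm{diam}^c(U_i\cap\mathcal F^c_f(x))$ over covers by at most $N$ closed balls with $\eta_x\bigl(\bigcup_iU_i\bigr)\ge 0.5$, and use exact invariance, the exponential contraction along $\mathcal W^c$, and Poincar\'e recurrence to force $\mathrm{ess\,sup}\,m(x)=0$. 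Hence every $\eta_x$ has an atom of mass at least $0.5/N$, and ergodicity (Hammerlindl--Ures, from $\lambda^c<0$) upgrades this to atomic disintegration, after which Theorem \ref{theorem:oneatom} applies as you say. Without the change of partition and the covering argument, your ``length-versus-volume dichotomy'' remains a heuristic rather than a proof.
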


\begin{proof} 

To begin, we prove that the size of the weak stable manifolds $\mathcal{W}^c(x)$ is uniformly bounded for $x$ belonging to the regular set. In particular this enables us to prove that the partition (mod-$0$) by $\mathcal{W}^c(x)$ is a measurable partition.

\begin{lemma}
 The size of  $\{ \mathcal W^{c}(x) \}_{ \{x \; : \; \lambda^c(x) <0\} }$ is uniformly bounded for $x \in \Gamma$. More precisely, the image of  $ \mathcal W^{c}(x)$ by $h$ is a unique point.
\end{lemma}

\begin{proof}
Let $\tilde{f}:\mathbb R^3 \rightarrow \mathbb R^3$ and $\tilde{A}:\mathbb R^3 \rightarrow \mathbb R^3$ denote the lifts of $f$ and $A$ respectively and $\tilde{h}:\mathbb R^3 \rightarrow \mathbb R^3$ the lift of the semi-conjugacy $h$ between $f$ and $A$. Consider $\gamma \subset \widetilde{\mathcal W}^c(x)$, where $\widetilde{\mathcal W}^c(x)$ is the lift of $\mathcal W^c(x)$. Thus, $\gamma$ is inside the intersection of the center manifold of $\tilde{f}$ and the Pesin-stable manifold of $\tilde{f}$ passing through $x$. 

Let us show that $\tilde{h}$ collapses $ \widetilde{\mathcal W}^{c}(x)$ to a unique point. If we prove that, it clearly comes out ( from the bounded distance of $h$ to identity that, the size of $ \mathcal W^{c}(x)$ is uniformly bounded. Suppose by contradiction that $h(\gamma)$ has more than one point. By semi-conjugacy $\tilde{h}(f^ n(\gamma)) = A^ n(\tilde{h}(\gamma)).$ As $\tilde{h}(\gamma)$ is a subset of weak unstable foliation of $\tilde{A}$ for large $n$ the size of $\tilde{A}^n(h(\gamma))$ is large. On the other hand, $\gamma$ is in the Pesin stable manifold of $f$ and consequently for large $n$, the size of $\tilde{f}^n(\gamma)$ is very small. As $\|\tilde{h} - id\| \leq K$ we conclude that for large $n$ the size of $\tilde{h}(\tilde{f}^n(\gamma))$ can not be very big. This contradiction completes the proof.
%
%
%
%
%
%
\begin{corollary}
The family $\{ \mathcal W^{c}(x) \}_{ \{x \; : \; \lambda^c(x) <0\} }$ forms a measurable partition. 
\end{corollary}
This corollary uses the same idea of the proof of the Proposition \ref{prop:compact}. However, that proposition is proved for continuous foliations and we adapt the proof for the Pesin measurable lamination.

First of all we consider a new partition $\{ \overline{\mathcal W^c(x)} \}$ whose elements are the closure of the elements $\mathcal W^c(x)$, that is, $\overline{\mathcal W^c(x)}$ is a bounded length center segment with its extremum points. Since $h$ collapses the Pesin-stable manifolds of $f$ into points, two different elements $\mathcal W^c(x)$ and $\mathcal W^c(y)$ cannot have a common extrema, so that $\{\overline{\mathcal W^c(x)}\}$ is indeed a partition with compact elements. Let us prove that it is indeed a measurable partition.

Let $\{x_j\}_{j \in \mathbb{N}}$ be a countable dense set of $M = \mathbb{T}^3.$ For each $x_j$ and $k, l \in \mathbb{N}$ we define $C_l (x_j, k)$ to be the union of $\mathcal{W}^c(y),  y \in \Gamma_l$ such that $\mathcal{W}^c(y)$ intersects the closed ball $B(x_j, \frac{1}{k}).$ By continuity of $\mathcal{W}^c(.)$ on $\Gamma_l$ we conclude that $C_l (x_j, k)$ is closed and consequently measurable. Indeed, if $y_n \in C_l (x_j, k)$ converges to $y$ then $y \in \Gamma_l$ and moreover $\mathcal{W}^c(y)$ intersects the closure of $B(x_j, \frac{1}{k}).$ 

Now, we need to separate two weak stable manifolds by means of some $C_l (x_j, k).$ Taking two elements $\mathcal{W}^c(a)$ and  $\mathcal{W}^c(b)$ there exists $l \in \mathbb{N}$ such that $a , b \in \Gamma_l$ and it is enough to take small enough $k$ and some $x_j$ such that $C_l (x_j, k)$ contains $\mathcal{W}^c(a)$ and not $\mathcal{W}^c(b).$ It is easy to see that for each $x \in \Gamma$ we have $\mathcal{W}^c(x) = \bigcap_{k, l, j}C_l (x_j, k)^{*} $ where $C_l (x_j, k)^{*}$ is either $C_l (x_j, k)$ or $\mathbb{T}^3 \setminus C_l (x_j, k).$

Now, observe that if $y$ is an extremum point of $\mathcal W^c(x)$ then $y$ cannot have negative center Lyapunov exponent, otherwise it would be in the interior of $\mathcal W^c(x)$. So, the set of extremum points of the elements $\overline{W^c(x)}$ is inside the set of points with non negative center Lyapunov exponent, and therefore it has zero measure. Thus, removing such points, the measurability of the partition $\{\overline{\mathcal W^c(x)}\}$ implies the measurability of $\{\mathcal W^c(x)\}$, concluding the proof of the lemma.
%
\end{proof}


\begin{lemma}\label{lemma:gamma}
 There exist a set $\Lambda \subset \mathbb T^3$ and a real number $R > 0$,  such that $Vol (\Lambda) > 0.5$ and if $x \in \Lambda$, then $diam^c(\mathcal W^{c}(x)) > R$.
\end{lemma}
\begin{proof}
Comes from the fact that $\lim_{n \rightarrow \infty} Vol (\{x : diam^c (\mathcal W^c(x)) > 1/n\}) = 1$
\end{proof}

The next lemma is inspired on the work of Ruelle, Wilkinson \cite{RW}.

\begin{lemma}
Disintegration of volume on the measurable partition $ \{ \mathcal W^{c}(x) \}$ is atomic.
\end{lemma}

\begin{proof}
Let $\pi: \mathbb T^3 \rightarrow \mathbb T^3 / \{ \mathcal W^{c}(x) \}$ be the natural projection, $\nu := \pi_* Vol$ and $\Lambda$ as in Lemma \ref{lemma:gamma}. Consider $B= \pi (\Lambda)$ and $N$ be the  minimum number of balls with diameter $R/10$ needed to cover $\mathbb T^3$. We also denote $\{\eta_x\}$ the system of conditional measures of $Vol$ on the partition $\{\mathcal W^c(x)\}$ defined previously.
For $x \in \mathbb T^3$ define $$m(x) = inf \sum diam^c (U_i \cap \mathcal F^c_ f(x))$$
where the infimum is taken over all collections of closed balls $U_1, \ldots, U_k$ in $\mathbb T^3$ such that $k \leq N$ and $\eta_x (\cup_{i=1}^k U_j) \geq 0.5$.

We now define $$m=\mbox{ess} \sup_{x \in B \;}m(x).$$

We claim that $m=0$. Suppose, by contradiction that $m>0$. Then, given $\varepsilon>0$ there exist an integer $J$ such that
$$C \varepsilon^J N < m/2,$$ where $C>0$ is such that $d^c(f^i(y), f^i(z))\leq C \varepsilon ^i d^c(y,z)$ for all pair of points $y,z \in \Lambda$ with $z \in \mathcal W^c(y)$.

Let $\mathcal U$ be a cover of $\mathbb T^3$ by $N$ closed balls of diameter $R/10$. For $x \in \mathbb T^3$ such that $\pi(x)\in B$, let $U_1(x), \ldots, U_{k(x)}(x)$ such that $\eta_x(\cup_{i=1}^{k(x)}U_i(x)) \geq 0.5$. Note that

$$f^j_*\eta_x = \eta_{f^j(x)} \Rightarrow \eta_{f^j(x)}\left (\bigcup_{i=1}^{k(x)}f^j(U_i(x))\right) \geq 0.5. \; \forall i \in \mathbb N.$$

Also note that $ diam^c (f^j(U_i(x))) \leq \alpha \varepsilon^j.$

By Poincar\'e reccurence theorem, $y = f^{J_0}(x)$

\begin{eqnarray*}
 m(y) & \leq & \sum_{j=1}^{k(x)} diam (f^{J_0}(U_j(x))) \\
 &\leq & C k(x) \varepsilon^{J_0} \\
 & \leq & C N \alpha \varepsilon^{J_0} \\
 & < & m/2.
\end{eqnarray*}

Then, $m = ess \; sup_{x \in B}m(x) < m/2$, which is a contradiction with $m >0$.

Hence, $m=0$ implies that there is a sequence of closed balls with diameter going to zero and having measure greater then $0.5/N$. By Hammerlindl-Ures \cite{HU} we know that $f$ is ergodic, hence we have atomic disintegration.

\end{proof}

Once obtained atomicity we can apply Theorem \ref{theorem:oneatom} (see Remarks \ref{remark: nonnull} and \ref{remark:proof.theorem}) to get one atom per leaf.
\end{proof}

\section{Maximal entropy measure}

Given a volume preserving partially hyperbolic diffeomorphism $f: \mathbb T^3 \rightarrow \mathbb T^3$, we say that a measure $\mu$ is a {\bf maximizing entropy measure} if the metric entropy with respect to the measure is equal to the topological entropy of $f$, i.e,
$$h_{\mu}(f) = h_{top}(f).$$

Given a volume preserving partially hyperbolic diffeomorphism $f:\mathbb T^3 \rightarrow \mathbb T^3$ that is isotopic to a linear Anosov, the maximizing entropy measure is unique and is natural, in the sense that it is just the pull-back of the volume measure by the semi-conjugacy function $h$. More specifically, R. Ures showed in \cite{Ures}

\begin{theorem}
Let $f:\mathbb T^3 \rightarrow \mathbb T^3$ be an absolutely partially hyperbolic diffeomorphism homotopic to a hyperbolic linear automorphism $A:\mathbb T^3 \rightarrow \mathbb T^3$. Then, $f$ has a unique maximizing entropy measure $\mu_f$. More precisely, if $h$ is the semi-conjugacy between $f$ and $A$, i.e, $h \circ f = A \circ h$, then
$$m = h_{*}\mu_f$$

where $m$ denotes the Lebesgue measure on $\mathbb T^3$.
\end{theorem}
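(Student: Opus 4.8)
The plan is to identify the Lebesgue measure $m$ with the unique maximal entropy measure of the linear model $A$ and to push this identification back through the semi-conjugacy $h$, the point being that $h$ changes entropy by nothing and is essentially invertible on full-measure sets. I would start from two classical facts: since $A$ is a hyperbolic linear automorphism of $\mathbb T^3$, Haar $=$ Lebesgue measure $m$ is its unique measure of maximal entropy, with $h_m(A)=h_{top}(A)=\sum_{|\lambda_i|>1}\log|\lambda_i|$, and $(A,m)$ is ergodic. Also, by \cite{Franks} the semi-conjugacy $h$ is surjective and satisfies $\|\widetilde h-\operatorname{id}\|\le K$, so it is a topological factor map $(\mathbb T^3,f)\to(\mathbb T^3,A)$ and $h_*$ carries $f$-invariant measures to $A$-invariant ones. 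The analytic heart is the entropy formula along $h$: I would invoke the Ledrappier--Walters inequality, which for a factor map yields, for every $f$-invariant $\mu$ with $\bar\mu:=h_*\mu$,
$$ h_\mu(f)\ \le\ h_{\bar\mu}(A)+\int_{\mathbb T^3}h_{top}\big(f,h^{-1}(y)\big)\,d\bar\mu(y), $$
where $h_{top}(f,\cdot)$ is Bowen's topological entropy of a set, together with the reverse inequality $h_{\bar\mu}(A)\le h_\mu(f)$ valid for any factor. Hence everything reduces to showing that each fiber $h^{-1}(y)$ has zero topological entropy.

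For the fiber-entropy step I would use the geometry of $h$ coming from the DA structure: by \cite{Ures} and the leaf conjugacy $\mathcal F^c(A)=h(\mathcal F^c(f))$, each $h^{-1}(y)$ is a compact subset of a single center leaf of $f$; from $\|\widetilde h-\operatorname{id}\|\le K$ it has diameter at most $2K$, so by quasi-isometry of $\mathcal F^c$ it is contained in a center arc of length $\le L_0:=2QK$, and the same holds for $f^n(h^{-1}(y))=h^{-1}(A^n y)$ for every $n$. A direct Bowen-ball count then gives $h_{top}(f,h^{-1}(y))=0$: cutting the arc into maximal pieces of $(n,\varepsilon)$-Bowen size $\varepsilon$, each cut is realized at some time $i\le n$ where that piece has length $\ge\varepsilon$, and at a fixed $i$ the total length of the iterate of the arc is $\le L_0$, so there are $O(n/\varepsilon)$ pieces and the exponential growth rate is $0$. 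Feeding this back, $h_\mu(f)=h_{\bar\mu}(A)$ for every $f$-invariant $\mu$; taking the supremum over $\mu$ and using the variational principle (and that a factor does not increase entropy) gives $h_{top}(f)=h_{top}(A)$; and if $\mu$ is an MME of $f$ then $h_{\bar\mu}(A)=h_\mu(f)=h_{top}(f)=h_{top}(A)$, so $\bar\mu=h_*\mu=m$ by uniqueness of the MME of $A$. This already proves $m=h_*\mu_f$.

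It remains to obtain uniqueness and existence of $\mu_f$. Let $Z\subset\mathbb T^3$ be the union of the non-degenerate fibers of $h$; then $Z$ is measurable, $Z=h^{-1}(h(Z))$, and $h(Z)=\{y:\operatorname{diam}h^{-1}(y)>0\}$ is $A$-invariant, so $m(h(Z))\in\{0,1\}$ by ergodicity of $(A,m)$. The claim is $m(h(Z))=0$: were it $1$, then in a local box of $A$ foliated by center segments $J$, Fubini against the absolutely continuous $m$ would force, for a positive-measure set of segments $J$, that positively many $y\in J$ have a non-degenerate fiber; but the fibers over a single $J$ are pairwise disjoint sub-arcs of the one bounded-length center arc $h^{-1}(J)$ of $f$, so for each $k$ only finitely many of them have length $\ge 1/k$, whence only countably many $y\in J$ have a non-degenerate fiber --- contradicting positivity of one-dimensional measure. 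Granting $m(h(Z))=0$: any $f$-invariant $\mu$ with $h_*\mu=m$ has $\mu(Z)=m(h(Z))=0$, and $h$ restricts to a measurable, $f$-to-$A$ equivariant bijection of $\mathbb T^3\setminus Z$ onto $\mathbb T^3\setminus h(Z)$, so $\mu$ is forced to equal $\sigma_*m$ where $\sigma:=(h|_{\mathbb T^3\setminus Z})^{-1}$; conversely $\sigma_*m$ is $f$-invariant, carried by $\mathbb T^3\setminus Z$, measurably isomorphic to $(A,m)$, hence has entropy $h_{top}(A)=h_{top}(f)$. Thus $\mu_f=\sigma_*m$ is the unique MME of $f$, with $h_*\mu_f=m$.

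I expect the main obstacle to be the claim $m(h(Z))=0$, i.e. that the set collapsed by $h$ is invisible to Lebesgue measure on the linear base; the rest is either classical (intrinsic ergodicity of $A$, Ledrappier--Walters, the variational principle) or routine given $\|\widetilde h-\operatorname{id}\|\le K$ and the quasi-isometry of $\mathcal F^c$. One should also keep careful track of the fact that $h$ is genuinely non-injective on $Z$ and perform all the ``measurable bijection'' and ``equivariant section'' manipulations on the $m$- and $\mu$-full sets where the fibers are single points.
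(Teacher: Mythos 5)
This theorem appears in the paper only as a quotation of Ures' result, with no proof supplied, so there is no in-paper argument to compare against. Your proposal is, in outline, a faithful reconstruction of the standard argument from the cited reference: intrinsic ergodicity of the hyperbolic automorphism $A$, the Ledrappier--Walters/Bowen entropy inequality along the factor map $h$, vanishing of the topological entropy of the fibers via the bounded-length-arc Bowen count (the $O(n/\varepsilon)$ estimate is correct, since at each time $i\le n$ the iterate of the containing arc still has length $\le L_0$), and uniqueness via the observation that the collapsed set is invisible to $m$. All of these steps are sound, granting the standard facts (fibers lie in single center leaves, $h$ maps center leaves onto center leaves) that you import from \cite{Ures} and the quasi-isometry theory the paper itself relies on.

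The one genuine gap is in the countability argument of the uniqueness step. You assert that the fibers $h^{-1}(y)$, $y\in J$, are pairwise disjoint \emph{sub-arcs} of the bounded center arc $h^{-1}(J)$, and then count: for each $k$ only finitely many disjoint arcs of length $\ge 1/k$ fit in a bounded arc. That count is valid only if the fibers are connected, whereas what you actually established is that each fiber is a \emph{compact subset} of a single center leaf of diameter $\le 2K$. Pairwise disjoint compact subsets of a bounded interval, each of diameter $\ge 1$, can be uncountable (e.g. the two-point sets $\{t,t+1\}\subset[0,2]$ for $t\in(0,1)$), so without connectedness the conclusion ``only countably many non-degenerate fibers per leaf'' does not follow and $m(h(Z))=0$ is not yet proved. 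Connectedness is true, and proving it is where the dynamics re-enters: from $\|\tilde h-\operatorname{id}\|\le K$ one gets $h^{-1}(y)=\{z\in\mathcal F^c_f(x):\sup_{n\in\mathbb Z}\|\tilde f^n(z)-\tilde f^n(x)\|<\infty\}$ for any $x\in h^{-1}(y)$, and since $\tilde f^n$ maps center leaves to center leaves preserving the order of points, any $w$ lying between two points $z_1,z_2\in h^{-1}(y)$ on the leaf has $\tilde f^n(w)$ on the center arc joining $\tilde f^n(z_1)$ to $\tilde f^n(z_2)$, whose length is at most $2QK$ by quasi-isometry; hence $w\in h^{-1}(y)$. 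With this lemma (which is the same standard fact that makes $h$ monotone on center leaves) inserted, your uniqueness argument, and with it the whole proof, goes through.
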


In the same article, R. Ures also showed that the center Lyapunov exponent for the maximizing measure entropy is greater or equal to the center exponent of the linear hyperbolic diffeomorphism. In the hypothesis of Theorem \ref{theorem:existatom} this fact contrasts with what happens to the center exponent with respect to the Lebesgue measure, and with this we prove in Theorem \ref{ugibbs} that $\mu_f$ cannot be $u$-gibbs.

\begin{theorem}[Theorem $5.1$ of \cite{Ures}] \label{theorem:raul}
Let $f:\mathbb T^3 \rightarrow \mathbb T^3$ be a $C^{1+\alpha}$ absolutely partially hyperbolic diffeomorphism homotopic to a hyperbolic linear automorphism $A$ with center Lyapunov exponent $\lambda^c_A > 0$. Let $\mu_f$ be the maximizing measure of $f$. Then, the center Lyapunov exponent of $\mu_f$, $\lambda^c_{\mu_f}$, satisfies
$$\lambda^c_{\mu_f} \geq \lambda^c_A.$$
\end{theorem}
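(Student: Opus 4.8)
The plan is to bound the metric entropy $h_{\mu_f}(f)$ from below by the topological entropy of $A$, to expand that quantity in terms of the eigenvalues of $A$, and then to squeeze $\lambda^c_{\mu_f}$ between this lower bound and Ruelle's inequality, using that the unstable exponent of $f$ can never exceed that of its linearization.

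First I would use that $(A,m)$ is a measure-theoretic factor of $(f,\mu_f)$ through the semi-conjugacy $h$, where $m=h_*\mu_f$ is the Lebesgue (Haar) measure by Ures's theorem recalled just above. Since entropy does not decrease when one passes from a factor to an extension, $h_{\mu_f}(f)\ge h_m(A)$; and since the Haar measure realizes the topological entropy of the hyperbolic automorphism $A$, this gives $h_{\mu_f}(f)\ge h_{top}(A)=\lambda^u_A+\lambda^c_A$, the sum of the logarithms of the eigenvalues of $A$ of modulus greater than one (both of these are positive because $\lambda^c_A>0$ by hypothesis).

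Next I would apply Ruelle's inequality to the measure $\mu_f$, which is ergodic since it is the unique maximizing measure. Because $E^s$ is uniformly contracted and $E^u$ uniformly expanded, $\lambda^s_{\mu_f}<0<\lambda^u_{\mu_f}$, so Ruelle's inequality reads $h_{\mu_f}(f)\le \lambda^u_{\mu_f}+\max\{\lambda^c_{\mu_f},0\}$. To close the argument I need $\lambda^u_{\mu_f}\le\lambda^u_A$; this is the semi-rigidity of the unstable exponent, and I would obtain it at $\mu_f$-almost every regular point exactly as in \cite{MT}: take a short segment $\gamma$ of the unstable leaf of $f$ through the point; since $h$ carries unstable leaves of $f$ into unstable leaves of $A$, on which $A^n$ expands by the factor $e^{n\lambda^u_A}$, the estimate $\|\tilde h-\mathrm{id}\|\le K$, the large-scale comparison (\ref{large-scale}) and quasi-isometry of $\mathcal{F}^u_f$ force the length of $\tilde f^n(\gamma)$ to grow no faster than $e^{n\lambda^u_A}$; bounded distortion of $f^n$ along $E^u$ on $\gamma$ (available because $f\in C^{1+\alpha}$ and $E^u$ is uniformly expanded) then transfers this length growth to the derivative cocycle, giving $\lambda^u_{\mu_f}\le\lambda^u_A$.

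Combining the three facts,
$$\lambda^u_A+\lambda^c_A\le h_{\mu_f}(f)\le \lambda^u_{\mu_f}+\max\{\lambda^c_{\mu_f},0\}\le \lambda^u_A+\max\{\lambda^c_{\mu_f},0\},$$
hence $\max\{\lambda^c_{\mu_f},0\}\ge\lambda^c_A>0$, which forces $\lambda^c_{\mu_f}\ge\lambda^c_A$. The only step that is not soft is the semi-rigidity input $\lambda^u_{\mu_f}\le\lambda^u_A$: one must control the distortion along $\gamma$ and, above all, make sure the argument is genuinely measure-independent, i.e.\ valid at every regular point rather than only Lebesgue-almost everywhere. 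Everything else is the variational principle, the entropy formula for a toral automorphism, Ruelle's inequality, and arithmetic.
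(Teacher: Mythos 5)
The paper does not actually prove this statement --- it is imported verbatim as Theorem 5.1 of \cite{Ures} --- so there is no internal proof to compare against; what can be said is that your argument is correct and is exactly the kind of entropy--exponent bookkeeping the paper itself performs later in the proof of Theorem \ref{ugibbs} (where $h_{\mu_f}(f)=h_{top}(A)=\lambda^u_A+\lambda^c_A$ is combined with a Ledrappier--Young/Ruelle-type upper bound). Your three inputs all hold: the factor inequality $h_{\mu_f}(f)\ge h_{h_*\mu_f}(A)=h_m(A)=\lambda^u_A+\lambda^c_A$ is legitimate and non-circular, since in \cite{Ures} the measure $\mu_f$ is constructed precisely so that $h_*\mu_f=m$ before Theorem 5.1 is derived; Ruelle's inequality applies to the ergodic measure $\mu_f$ and reads as you state because $E^s$ is uniformly contracted; and the semi-rigidity $\lambda^u_{\mu_f}\le\lambda^u_A$ is indeed measure-independent --- the length-growth estimate via $\|\tilde h-\mathrm{id}\|\le K$, the quasi-isometry of $\mathcal F^u_f$, and bounded distortion along the uniformly expanded $E^u$ (available for $C^{1+\alpha}$) bound $\|Df^n|E^u(x)\|$ by $C e^{n\lambda^u_A}$ at \emph{every} point, hence at every Oseledets-regular point for $\mu_f$. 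One small imprecision worth fixing: $\tilde h$ does not in general send strong unstable leaves of $f$ into \emph{strong} unstable leaves of $A$; the boundedness of $\|A^{-n}(\tilde h(x)-\tilde h(y))\|$ only places $\tilde h(y)-\tilde h(x)$ in the full two-dimensional unstable subspace $E^{su}_A\oplus E^{wu}_A$. This is harmless for your purposes, since all you use is the universal upper bound $\|A^n v\|\le Ce^{n\lambda^u_A}\|v\|$, but the sentence should be stated that way. With that correction the chain $\lambda^u_A+\lambda^c_A\le h_{\mu_f}(f)\le\lambda^u_{\mu_f}+\max\{\lambda^c_{\mu_f},0\}\le\lambda^u_A+\max\{\lambda^c_{\mu_f},0\}$ forces $\lambda^c_{\mu_f}\ge\lambda^c_A$ as claimed, and you have in effect given a clean, self-contained proof of the cited result.
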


To prove Theorem \ref{ugibbs} we need a celebrated result of Ledrappier-Young \cite{LY1}.

\begin{theorem}[Theorem $A$ of \cite{LY1}] \label{LY}
Let $f: M \rightarrow M$ be a $C^2$ diffeomorphism of a compact Riemannian manifold $M$ preserving a Borel probability measure $\mu$. Then $\mu$ has absolutely continuous conditional measure on unstable manifolds if, and only if, 
$$h_{\mu}(f) = \int \sum_{i} \lambda^{+}_i(x) \dim E_i(x) d\mu(x)$$

where $a^{+}:= \max\{a,0\}$.
\end{theorem}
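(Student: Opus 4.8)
The plan is to prove the two implications separately, after recording that the inequality
$$h_\mu(f) \le \int \sum_i \lambda_i^+(x)\,\dim E_i(x)\,d\mu(x)$$
(Ruelle's inequality) holds with no hypothesis, so that the real content is the characterization of the equality case. The entire argument is organized around a measurable partition $\xi$ subordinate to the unstable foliation $W^u$: each atom $\xi(x)$ is a relatively open neighborhood of $x$ inside $W^u(x)$, the partition is increasing in the sense $f^{-1}\xi \ge \xi$, and $\bigvee_{n\ge 0} f^{-n}\xi$ separates points while $\bigwedge_n f^{-n}\xi$ is the partition into unstable leaves. The first step is to construct such a $\xi$ out of Pesin theory: using Lyapunov charts and local unstable manifolds $W^u_{\mathrm{loc}}(x)$ that vary measurably and have uniform geometry on the Pesin blocks $\Gamma_l$, one builds $\xi$ following Ledrappier--Strelcyn. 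The structural fact I would then establish is the Rokhlin-type identity
$$h_\mu(f) = H_\mu(\xi \mid f\xi),$$
so that the entropy is entirely recovered along the unstable partition; this is where $\xi$ being a one-sided generator modulo the contracting directions is used.

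For the forward direction (absolute continuity implies the formula) I would compute $H_\mu(\xi \mid f\xi)$ explicitly. Since $f$ maps $W^u(x)$ onto $W^u(f(x))$ and expands unstable volume by the Jacobian $J^u(x)=|\det(D_xf|_{E^u})|$, the hypothesis that the conditional measures $\mu^u_x$ are equivalent to $\mathrm{Leb}_{W^u(x)}$ lets me transport them by the Lebesgue change of variables along leaves. The information function of $\xi$ given $f\xi$ then integrates to $\int \log J^u\,d\mu$, and by Oseledets together with Birkhoff one has $\int \log J^u\,d\mu = \int \sum_i \lambda_i^+(x)\,\dim E_i(x)\,d\mu$. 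This recovers Pesin's entropy formula and is the routine half.

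The converse (the formula implies absolute continuity) is the crux, and here equality in Ruelle's inequality must be upgraded to a leafwise Radon--Nikodym statement via the Ledrappier--Young dimension analysis. I would introduce the intermediate unstable foliations $W^i$ tangent to $E^1\oplus\cdots\oplus E^i$ along the Oseledets filtration, define the transverse partial dimensions $\gamma_i$ of $\mu$ through the a.e.\ existence of local dimensions of the conditional measures on $W^i/W^{i-1}$, and prove the entropy--dimension identity
$$h_\mu(f) = \sum_i \lambda_i(\gamma_i-\gamma_{i-1}).$$
The engine behind this identity is a leafwise Shannon--McMillan--Breiman estimate: the $\mu^u_x$-measure of a dynamical ball inside $W^u(x)$ decays like $e^{-nh}$, whereas its geometric size is governed by the exponents, and comparing the two rates forces the combinatorial formula. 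Granting it, the assumption $h_\mu(f)=\int\sum_i\lambda_i^+\dim E_i\,d\mu$ forces $\gamma_i=\dim E_i$ for every expanding index, i.e.\ each transverse dimension is maximal. The final move promotes maximal dimension along the whole unstable flag to genuine absolute continuity: arguing inductively down the filtration and invoking a leafwise density-point (Besicovitch covering) argument, maximality at each stage prevents the conditionals from concentrating transversally, and one concludes $\mu^u_x \ll \mathrm{Leb}_{W^u(x)}$.

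The main obstacle is precisely this converse, and within it the hardest technical point is establishing the entropy--dimension formula together with the almost-everywhere existence of the partial dimensions $\gamma_i$. This demands a delicate induction over the intermediate foliations $W^i$, careful bookkeeping of how the partition $\xi$ interacts with each $W^i$ under iteration, and uniform covering and distortion estimates that are only available on Pesin blocks $\Gamma_l$; the passage from maximal dimension to absolute continuity through leafwise density points is where essentially all of the Pesin-theoretic geometry is consumed.
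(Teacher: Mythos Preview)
The paper does not prove this statement at all: it is quoted verbatim as Theorem~A of Ledrappier--Young \cite{LY1} and used as a black box in the proof of Theorem~\ref{ugibbs}. There is therefore nothing in the paper to compare your proposal against.

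That said, your outline is a faithful high-level sketch of the Ledrappier--Young argument, with one caveat. The forward direction (absolute continuity implies the entropy formula) via the subordinate partition $\xi$, the identity $h_\mu(f)=H_\mu(\xi\mid f\xi)$, and the Jacobian computation is exactly how it goes. For the converse, however, you invoke the full Part~II dimension machinery (the transverse dimensions $\gamma_i$ and the identity $h_\mu=\sum_i\lambda_i\gamma_i$), whereas the original Part~I proof is more direct: Ledrappier--Young show that equality in Ruelle's inequality forces a certain Radon--Nikodym density along unstable leaves to exist, by analyzing the information function of $\xi$ relative to $f\xi$ and proving convergence of the would-be density via a martingale/Borel--Cantelli argument combined with bounded distortion along unstable manifolds. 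Your route through Part~II is logically valid and perhaps conceptually cleaner in hindsight, but it is heavier than what is actually needed and historically backwards---Part~II builds on Part~I, not the other way around.
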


\begin{maintheorema}\label{ugibbs}
Let $f : \mathbb T^3 \rightarrow \mathbb T^3$ be a partially hyperbolic diffeomorphism with Anosov linearization $A: \mathbb T^3 \rightarrow \mathbb T^3$. Assume that $\lambda^c_A>0$ and $\lambda^c_f < \lambda^c_A$. Then $\mu_f$ is not $u$-gibbs. Also denoting by $\lambda^u_{\mu_f}, \lambda^c_{\mu_f}, \lambda^s_{\mu_f}$ the Lyapunov exponents of $f$ (for $\mu_f$ almost every point) then
$$\lambda^c_{\mu_f} > \lambda^c_A \text{ or } \lambda^u_{\mu_f} > \lambda^u_A .$$
\end{maintheorema}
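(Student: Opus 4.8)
The plan is to first establish the $u$-Gibbs assertion and then deduce the dichotomy on exponents as a formal consequence. For the first part I would argue by contradiction: suppose $\mu_f$ is $u$-Gibbs. The unstable bundle of $f$ is one-dimensional (since $A$ has a weak unstable bundle, $f$ being absolutely partially hyperbolic with $TM = E^s\oplus E^c\oplus E^u$ and both $E^c$, $E^u$ expanding), so being $u$-Gibbs means the conditional measures of $\mu_f$ along $\mathcal F^u_f$ are equivalent to arc-length. By the Ledrappier-Young entropy formula (Theorem \ref{LY}), having absolutely continuous conditionals on the \emph{full} unstable manifold would give $h_{\mu_f}(f) = \sum_i \lambda_i^+(\mu_f)\dim E_i$; but here I must be careful, since $\lambda^c_{\mu_f}$ could be positive too. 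The cleaner route is: since $\mu_f$ is the measure of maximal entropy, $h_{\mu_f}(f) = h_{top}(f) = h_{top}(A) = \log|\lambda^u_A| + \log|\lambda^c_A|$ (the topological entropy being a homotopy invariant computed from the linear model, whose unstable eigenvalues are $\lambda^u_A$ and $\lambda^c_A$). On the other hand, the Ruelle inequality gives $h_{\mu_f}(f) \le \lambda^u_{\mu_f} + (\lambda^c_{\mu_f})^+$. Combining with Theorem \ref{theorem:raul}, which gives $\lambda^c_{\mu_f} \ge \lambda^c_A > 0$, we get $\lambda^u_{\mu_f} + \lambda^c_{\mu_f} \ge \log|\lambda^u_A| + \log|\lambda^c_A|$.

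Next I would bring in the semi-rigidity of the unstable exponent. Applying the folkloric result quoted as the first unnumbered theorem of the paper (valid for volume, $\lambda^u(f,x)\le\lambda^u(A)$), one wants the analogous bound for $\mu_f$: I would argue $\lambda^u_{\mu_f} \le \log|\lambda^u_A|$ using the large-scale comparison \eqref{large-scale} between $\tilde f$ and $A$ together with quasi-isometry of $\mathcal F^u_f$ — the expansion rate along unstable leaves of $f$ cannot exceed that of $A$ because $h$ is a bounded-distance semi-conjugacy sending $\mathcal F^u_f$ into $\mathcal F^u_A$ (or into the strong-unstable of $A$), and $A$ expands the top eigendirection at rate $\lambda^u_A$. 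Granting $\lambda^u_{\mu_f}\le \log|\lambda^u_A|$, the inequality $\lambda^u_{\mu_f}+\lambda^c_{\mu_f}\ge\log|\lambda^u_A|+\log|\lambda^c_A|$ forces $\lambda^c_{\mu_f}\ge\log|\lambda^c_A|=\lambda^c_A$, which is consistent, so I need strict inequality somewhere: if $\mu_f$ were $u$-Gibbs then the Ledrappier-Young equality would hold along $E^u$ and, combined with the Pesin entropy formula applied to the full splitting, one pins down $h_{\mu_f}(f)$ exactly, and the hypothesis $\lambda^c_f < \lambda^c_A$ (comparing volume exponents) enters to rule this out — this is the step where the precise bookkeeping matters, and it is the main obstacle.

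Finally, for the dichotomy: we have $h_{\mu_f}(f) = \log|\lambda^u_A| + \log|\lambda^c_A| = \lambda^u_A + \lambda^c_A$. By Ruelle's inequality, $h_{\mu_f}(f) \le \lambda^u_{\mu_f} + (\lambda^c_{\mu_f})^+$. If we had both $\lambda^c_{\mu_f}\le\lambda^c_A$ and $\lambda^u_{\mu_f}\le\lambda^u_A$, then since $\lambda^c_A>0$ we get $\lambda^u_{\mu_f}+(\lambda^c_{\mu_f})^+ \le \lambda^u_A+\lambda^c_A = h_{\mu_f}(f)$, so equality holds throughout; in particular Ruelle's inequality is an equality, which by Theorem \ref{LY} means $\mu_f$ has absolutely continuous conditionals on unstable manifolds, i.e. (as $\dim E^u=1$) $\mu_f$ is $u$-Gibbs — contradicting the first part. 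Hence $\lambda^c_{\mu_f} > \lambda^c_A$ or $\lambda^u_{\mu_f} > \lambda^u_A$, as claimed.

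I expect the delicate points to be: (i) justifying $h_{top}(f) = \lambda^u_A + \lambda^c_A$ cleanly (this is standard for DA systems via the semi-conjugacy, but should be cited), (ii) the bound $\lambda^u_{\mu_f}\le\lambda^u_A$ for the maximizing measure, which needs the large-scale estimate and quasi-isometry rather than just the volume statement, and (iii) making the $u$-Gibbs exclusion genuinely use $\lambda^c_f<\lambda^c_A$ — the slickest phrasing is likely to deduce that $u$-Gibbs $\Rightarrow$ equality in Ruelle $\Rightarrow$ a Pesin-formula identity that, together with the known value of $h_{\mu_f}(f)$ and Theorem \ref{theorem:raul}, is incompatible with the volume exponent drop.
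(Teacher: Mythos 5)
Your argument for the dichotomy (the second half) is sound and is in fact a slightly different, arguably cleaner route than the paper's: you combine $h_{\mu_f}(f)=h_{top}(A)=\lambda^u_A+\lambda^c_A$ with Ruelle's inequality and the equality case of Theorem \ref{LY}, whereas the paper invokes the Ledrappier--Young formula $h_{\mu}(f)=\gamma_1\lambda^u_{\mu}+\gamma_2\lambda^c_{\mu}$ with partial dimensions $\gamma_i\in(0,1]$ and uses that $\gamma_1=\gamma_2=1$ exactly in the $u$-Gibbs case. Both yield the same conclusion once non-$u$-Gibbsness is known. (One small point: in your equality case one gets $\lambda^c_{\mu_f}=\lambda^c_A>0$, so the relevant Pesin unstable manifold is two-dimensional, not $E^u$; the implication ``absolutely continuous conditionals on unstable manifolds $\Rightarrow$ $u$-Gibbs'' still matches the paper's definition, which refers to the foliation tangent to all positive exponents.)

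The genuine gap is the first part, and you flag it yourself as ``the main obstacle'': you never actually exclude the $u$-Gibbs property, and the entropy-bookkeeping strategy you sketch cannot do it, because every quantity in that computation ($h_{\mu_f}$, $\lambda^u_{\mu_f}$, $\lambda^c_{\mu_f}$) refers only to $\mu_f$, while the hypothesis that must be used, $\lambda^c_f<\lambda^c_A$, is a statement about Lebesgue-generic points; no identity among $\mu_f$-exponents can see it. The paper's mechanism is geometric rather than entropic: by Theorem \ref{theorem:raul} the set $A^+=\{x:\lambda^c(x)\ge\lambda^c_A\}$ has full $\mu_f$-measure; if $\mu_f$ were $u$-Gibbs, i.e.\ its conditionals on the Pesin unstable (here center-unstable) leaves were equivalent to leaf Lebesgue, then $A^+$ would meet some center-unstable leaf in a set $B$ of positive leaf Lebesgue measure; saturating $B$ by stable leaves preserves the center exponent and, by absolute continuity of $\mathcal F^s$, produces a set of positive volume contained in $A^+$ --- contradicting $\lambda^c(x)<\lambda^c_A$ for Lebesgue a.e.\ $x$. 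That transfer from $\mu_f$ to volume via leafwise absolute continuity of the conditionals together with stable holonomy is the missing idea. (Your auxiliary claim $\lambda^u_{\mu_f}\le\lambda^u_A$ is also unproved --- the quoted semi-rigidity theorem is stated only for volume --- but it is not actually needed in your final dichotomy argument, where it enters only as a hypothesis assumed for contradiction.)
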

\begin{proof}
By contradiction, assume that $\mu_f$ is $u$-Gibbs. Consider
$$A^{+}:=\{x\in M : \lambda^c(x) \text{ is defined and } \lambda^c(x) \geq \lambda^c_A \}.$$

By \ref{theorem:raul} we have that $\mu_f(A^{+}) = 1$. Since $\mu_f$ is $u$-gibbs, for some $x\in \mathbb T^3$ (actually for $\mu_f$ almost every $x$) the center unstable leaf $\mathcal F^{cu}(x)$ intersects the set $A^{+}$ in a set of positive leaf measure, that is,
$$m_{cu}(\mathcal F^{cu}(x) \cap A^{+}) >0.$$

Now, define $B=\mathcal F^{cu}(x) \cap A^{+}$ and set
$$C= \bigcup_{y \in B} \mathcal F^s.$$

Then, for all $y\in C$ we have that $\lambda^c(y) \geq \lambda^c_A$. Now by absolute continuity property, we get that 
$$m(C)>0.$$

That is, we constructed a set of positive Lebesgue measure for which every point has center Lyapunov exponent bigger then $\lambda^c_A$ contradicting one of the hypothesis. So $\mu_f$ is not $u$-gibbs as we wanted to show.

Now, from Theorem \ref{LY}, given an invariant measure $\mu$ we can write
$$h_{\mu}(f) = \lambda^u_{\mu}\gamma_1 + \lambda^c_{\mu}\gamma_2$$

for some constants $0< \gamma_1,\gamma_2 \leq 1$ where
$$\gamma_1=\gamma_2=1 \Leftrightarrow \mu_f \text{ is }u-\text{gibbs}.$$

Also, we know that $h_{top}(f) = h_{top}(A)$. Then, since $\mu_f$ is the maximizing entropy measure it follows that
\begin{equation} \label{desig}
\lambda^u_{\mu_f}\gamma_1 + \lambda^c_{\mu_f}\gamma_2 = \lambda^u_A + \lambda^c_A.
\end{equation}


Since $\mu_f$ is not $u$-Gibbs, then $\gamma_1$ and $\gamma_2$ cannot be both $1$. So
$$\lambda^u_{\mu_f} + \lambda^c_{\mu_f}  > \lambda^u_{\mu_f}\gamma_1 + \lambda^c_{\mu_f} \gamma_2 = \lambda^u + \lambda^c.$$

So either $\lambda^c_{\mu}>\lambda^c$ or $\lambda^u_{\mu} > \lambda^u.$
\end{proof}

We end by analyzing the disintegration of the measure of maximal entropy.

\begin{maintheorema}\label{theorem:atomic.max.mesure}
Let $f:\mathbb T^3 \rightarrow \mathbb  T^3$ be a partially hyperbolic diffeomorphism with Anosov linearization. If the disintegration of the maximizing entropy measure $\mu_f$ on the central leaves is atomic then it has exactly one atom per leaf.
\end{maintheorema}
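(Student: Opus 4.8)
The plan is to transcribe the proof of Theorem~\ref{theorem:oneatom}, replacing the volume by the maximizing measure $\mu_f$. Exactly as there, and after replacing $f$ by $f^{-1}$ if necessary --- which changes neither $\mu_f$ (uniqueness of the maximizing measure forces $\mu_{f^{-1}}=\mu_f$), nor the central foliation, nor the property of having $k$ atoms per leaf --- I would assume that $A$ has two eigenvalues of modulus larger than one, so that $\mathcal F^c_A=\mathcal F^{wu}_A$ and $\|A^{-n}|_{E^{wu}_A}\|=e^{-n\lambda^{wu}(A)}$ with $\lambda^{wu}(A)>0$. The one ingredient of the proof of Theorem~\ref{theorem:oneatom} that is not available a priori here is the ergodicity of every iterate $f^n$, which in that proof was supplied by accessibility (Remark~\ref{remark: nonnull}); since Theorem~\ref{theorem:atomic.max.mesure} assumes no accessibility, I would instead obtain it from intrinsic ergodicity. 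Namely, $f^n$ is again homotopic to the hyperbolic linear automorphism $A^n$, so by \cite{Ures} it has a unique maximizing measure; since $h_{\mu_f}(f^n)=n\,h_{\mu_f}(f)=n\,h_{\mathrm{top}}(f)=h_{\mathrm{top}}(f^n)$, that measure is $\mu_f$; and a unique maximizing measure is ergodic, because on its ergodic decomposition the entropy is the average of the (sub-maximal) entropies of the components, so almost every component is maximizing, hence equal to $\mu_f$. Thus $\mu_f$ is ergodic for $f^n$, for every $n$.

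With this in hand, fix a Markov partition $\{R_i\}$ of $A$ and set $\widetilde R_i:=h^{-1}(R_i)$. The step $\mu_f\big(\bigcup_i \mathrm{int}\,\widetilde R_i\big)=1$ becomes immediate and uses no ergodicity at all: $h_*\mu_f$ is the Lebesgue measure \cite{Ures}, one has $\bigcup_i \mathrm{int}\,\widetilde R_i\supseteq h^{-1}\big(\bigcup_i \mathrm{int}\,R_i\big)$, and $\mathbb T^3\setminus\bigcup_i \mathrm{int}\,R_i\subseteq\bigcup_i\partial R_i$ is a lower-dimensional set, hence Lebesgue-null. Consequently the partition $\widetilde{\mathcal P}$ of $\mathbb T^3$ (mod $0$) into the connected component of $\mathcal F^c_f(x)\cap\widetilde R(x)$ containing $x$ is a measurable partition, and I would disintegrate $\mu_f$ along it, writing $\{m_x\}$ for the conditionals. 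Note that volume-preservation of $f$ plays no role: from here on the argument only uses the invariance of $\mu_f$, the semiconjugacy $h$ (bounded distance $K$ from the identity), and the quasi-isometry of $\mathcal F^c_f$, which holds for absolute partial hyperbolicity on $\mathbb T^3$.

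From this point the three lemmas in the proof of Theorem~\ref{theorem:oneatom} transcribe essentially word for word. The Markov property $f(\mathcal F^c_{R(x)})\supseteq\mathcal F^c_{R(f(x))}$ gives $f_*m_x\le m_{f(x)}$; then $A_\delta:=\{x:m_x(\{x\})>\delta\}$ satisfies $f(A_\delta)\subseteq A_\delta$, so by ergodicity of $(f,\mu_f)$ there is a critical value $\delta_0$ for which every generic center segment carries exactly $\alpha_0:=1/\delta_0$ atoms, all of weight $\delta_0$. The geometric estimates --- quasi-isometry of $\mathcal F^c_f$, the bound $\|\widetilde h-\mathrm{id}\|\le K$, and $\|A^{-n}|_{E^{wu}_A}\|=e^{-n\lambda^{wu}(A)}$ --- do not involve the measure, so they again yield that all atoms of a generic leaf lie in a single center ball $B_c(\cdot,L_0)$, and that the ``infinitely many atoms per leaf'' alternative is impossible (a segment with more than $\tau$ atoms, iterated backwards, becomes shorter than the fixed scale $\beta$ while still carrying more than $\tau$ atoms, contradicting the bound coming from the $\alpha_0$-step); the inclusions $f^{-n}(B_L)\subseteq B_L$ needed to close the ergodic argument now use the ergodicity of the appropriate $f^n$ established above. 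Finally, with finitely many atoms per generic leaf, the passage to one atom is the same: orienting $\mathcal F^c_f$ (passing to a double cover, and to $f^2$ if $Df$ reverses the orientation of $E^c$ --- neither affecting $\mu_f$ nor the ergodicity), the set of first atoms of generic leaves is invariant of positive $\mu_f$-measure, hence of full measure, i.e.\ exactly one atom per leaf. Once obtained one could also, alternatively, invoke Theorem~\ref{theorem:oneatom} itself applied to the relevant iterate, since the point of the present proof is precisely to make the ergodicity-of-powers hypothesis available.

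I expect the only genuinely new point --- and the main obstacle --- to be the first paragraph: replacing the accessibility input of Theorem~\ref{theorem:oneatom} by the intrinsic ergodicity of $\mu_f$ under all iterates $f^n$, via uniqueness of the maximizing measure for each $f^n$ and affinity of the metric entropy. Once that is in place, and once one observes that $h_*\mu_f=\mathrm{Leb}$ trivializes the full-measure-of-interiors step, the rest is a faithful copy of the proof of Theorem~\ref{theorem:oneatom}.
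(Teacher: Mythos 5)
Your proposal is correct and follows essentially the same route as the paper: the paper likewise reduces to the proof of Theorem \ref{theorem:oneatom} by checking that $\mu_f$ is ergodic (via \cite{Ures}) and that $\mu_f\big(\bigcup_i\partial_c\widetilde R_i\big)=0$ using $h_*\mu_f=m$. Your additional care in deriving ergodicity of $\mu_f$ for every iterate $f^n$ from intrinsic ergodicity of $f^n$ (rather than just ergodicity of $f$) is a welcome refinement, since the $B_L$-lemma in the proof of Theorem \ref{theorem:oneatom} genuinely uses ergodicity of a power of $f$, a point the paper leaves implicit.
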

\begin{proof}
The proof is analogous to the proof of Theorem \ref{theorem:oneatom}, in this case we have to verify that
\begin{itemize}
\item $\mu_f$ is ergodic and
 
\item the $\mu_f(\cup_i \partial_c \widetilde R_i)=0$ (where $\partial_c \widetilde R_i$ as defined in the proof of Theorem \ref{theorem:oneatom}).
\end{itemize}

The first item was already observed by R. Ures in \cite{Ures}. 
The second item comes from the definition of $\mu_f$. We know that the entropy maximizing measure $\mu_f$ is unique and that $h_{*}\mu_f = m$. Now, by the definition of the sets $\widetilde{R}_i = h^{-1}(R_i)$ we have that $\partial_c \widetilde{R}_i = h^{-1}(\partial_c R_i)$. Thus
$$\mu_f(\partial_c \widetilde{R}_i ) = \mu_f (h^{-1}(\partial_c R_i)) = h_{*}\mu_f(\partial_c R_i) = m(\partial_c R_i) =0.$$

The proof follows as in the proof of Theorem \ref{theorem:oneatom}.
\end{proof}


%
%
%
%

%

\bibliographystyle{plain}
\bibliography{Referencias}

\end{document}